\newcommand{\pow}{\ensuremath{\mathscr{P}}}
\newcommand{\ZZ}{\ensuremath{\mathbb{Z}}}
\newcommand{\RR}{\ensuremath{\mathbb{R}}}
\def\leng{{\rm lh}}
\def\Cl{{\rm Cl}}
\def\id{{\rm id}}
\newcommand{\Bai}{\ensuremath{{}^\omega \omega}}
\newcommand{\Can}{\ensuremath{{}^\omega 2}}
\newcommand{\seqo}{\ensuremath{{}^{<\omega}\omega}}
\newcommand{\ZFC}{\ensuremath{{\rm \mathsf{ZFC}}}}
\newcommand{\AD}{\ensuremath{{\rm \mathsf{AD}}}}
\newcommand{\ZF}{\ensuremath{{\rm \mathsf{ZF}}}}
\newcommand{\DCR}{\ensuremath{{\rm \mathsf{DC}(\mathbb{R})}}}
\newcommand{\AC}{\ensuremath{{\rm \mathsf{AC}}}}
\newcommand{\ACOR}{\ensuremath{{\rm \mathsf{AC}_\omega(\mathbb{R})}}}
\newcommand{\SLOL}{\ensuremath{{\rm \mathsf{SLO^L}}}}
\newcommand{\SLOW}{\ensuremath{{\rm \mathsf{SLO^W}}}}
\newcommand{\SLO}{\ensuremath{{\rm \mathsf{SLO}}}}
\newcommand{\NFS}{\ensuremath{{\rm \lnot \mathsf{FS}}}}
\newcommand{\pII}{\ensuremath{{\rm \mathbf{II}}}}
\newcommand{\conc}{{}^\smallfrown}
\newcommand{\rank}[1]{\| #1 \|}
\newcommand{\imp}{\Rightarrow}
\newcommand{\fhi}{\varphi}
\newcommand{\bSigma}{\mathbf{\Sigma}}
\newcommand{\bPi}{\mathbf{\Pi}}
\newcommand{\bGamma}{\ensuremath{\mathbf{\Gamma}}}
\newcommand{\bDelta}{\mathbf{\Delta}}
\newcommand{\bLambda}{\mathbf{\Lambda}}
\newcommand{\bN}{\mathbf{N}}
\newcommand{\F}{\mathcal{F}}
\newcommand{\G}{\mathcal{G}}
\newcommand{\B}{\mathcal{B}}
\newcommand{\Lip}{\ensuremath{{\mathsf{Lip}}}}
\newcommand{\Bor}{\ensuremath{{\mathsf{Bor}}}}
\renewcommand{\L}{\ensuremath{{\mathsf{L}}}}
\newcommand{\W}{\ensuremath{{\mathsf{W}}}}
\newcommand{\Sat}{{\rm Sat}}
\newcommand{\restr}[2]{#1 \restriction #2}
\renewcommand{\sf}[1]{\ensuremath{{\mathsf{#1}}}}
\newcommand{\seq}[2]{\langle #1 \mid  #2 \rangle}
\newcommand{\onto}{\twoheadrightarrow}
\newtheorem{theorem}{Theorem}[section]
\newtheorem{lemma}[theorem]{Lemma}
\newtheorem{corollary}[theorem]{Corollary}
\newtheorem{proposition}[theorem]{Proposition}
\theoremstyle{definition}
\newtheorem{question}{Question}
\newtheorem{claim}{Claim}[theorem]
\newtheorem{defin}{Definition}
\newtheorem{conj}{Conjecture}
\theoremstyle{remark}
\newtheorem{remark}[theorem]{Remark}
\begin{document}

\title{Borel-amenable reducibilities for sets of reals}
\author{Luca Motto Ros}
\date{\today}
\address{Kurt G\"odel Research Center fo Mathematical Logic\\ University of Vienna \\ Austria}
\email{luca.mottoros@libero.it}
\thanks{Research partially supported by FWF Grant P 19898-N18. The author would like to thank his PhD\ thesis advisors Alessandro
Andretta and Riccardo
Camerlo for all the useful suggestions and  stimulating discussions on Wadge
theory.}
\keywords{Determinacy, Wadge hierarchy}
\subjclass[2000]{03E15, 03E60}

\begin{abstract}
  We show that if $\F$ is any ``well-behaved''
  subset of the Borel functions and we assume the Axiom of Determinacy then the
  hierarchy of degrees on 
  $\pow(\Bai)$ induced by $\F$ turns out to look like
  the Wadge hierarchy (which is the special case where $\F$
is the set of continuous functions).
\end{abstract}

\maketitle

\section{Introduction}

Intuitively, a set $A$ is \emph{simpler} than --- or as complex as
--- a set $B$ if the problem of verifying membership in $A$
can be reduced to the problem of verifying membership in $B$. In
particular, if $X$ is a Polish space and $A,B \subseteq X$, we say that $A$ is \emph{(continuously) reducible}
to $B$ just in case there is a continuous function $f$ such that
$x \in A$ if and only if $f(x) \in B$ for every $x \in X$, in
symbols $A \leq_\W B$. (The symbol ``$\W$'' is for W.~Wadge
who started a systematic study of this relation in his
\cite{wadgethesis}.)
Thus in this setup continuous functions are used as
\emph{reductions} between subsets of $X$. 
The equivalence classes of the equivalence relation
induced by $\leq_\W$ on the Baire space $\Bai$ are called
\emph{Wadge degrees}, and the preorder $\leq_\W$ induces a partial order
$\leq$ on them. Using game theoretic techniques, Wadge proved a simple but fundamental  Lemma which has played a key role in
various parts of Descriptive Set Theory: $\AD$, the \emph{Axiom of 
Determinacy}, implies that if $A,B \subseteq \Bai$ then
\begin{equation}\label{eqSLOW}
\tag{$\star$} A \leq_\W B \quad \vee \quad \Bai
    \setminus B \leq_\W A.
\end{equation}
Wadge's Lemma says that $\leq_\W$ is a semi-linear order, therefore 
\eqref{eqSLOW} is usually denoted by $\SLOW$.
Starting from this result, Wadge (and many other set theorists
after him) extensively studied the
preorder $\leq_\W$ and gave  under $\AD+\DCR$ a complete description of  
the structure of
the Wadge degrees (and also of  the \emph{Lipschitz degrees} --- 
see Section \ref{sectionpreliminaries}).
In \cite{andrettamartin} and \cite{andrettamoreonwadge}   A.~Andretta and 
D.~A.~Martin 
considered  Borel reductions and $\bDelta^0_2$-reductions instead of 
continuous
reductions: using topological arguments (mixed with
game-theoretic techniques in the second case), 
they showed that the degree-structures induced by these reducibility notions look exactly like
 the Wadge hierarchy.
Thus a natural question arises:

\begin{question}
  Given any reasonable set of functions $\F$ from the Baire space into 
  itself, which kind of structure
  of degrees is induced if the functions from $\F$ are used as
  reductions between sets (i.e.\ if we consider the preorder $A
  \leq_\F B \iff {A = 
  f^{-1}(B)}$ for some $f \in \F$)?
\end{question}

The term ``reasonable'' is a bit vague, but
should be at least such that all  ``natural'' sets of functions, such as
continuous functions, Borel functions and so on,
 are reasonable (these sets of functions will be called here \emph{Borel-amenable} --- see Section
\ref{sectionborelamenability} for the definition).

In this paper we will answer to the previous Question 
for the Borel context, 
i.e.\ when $\F
\subseteq \Bor$: we will prove that under (a weakening of) $\AD+\DCR$ each of these sets of 
functions yields a semi-linear ordered 
stratification of degrees, and provides in this way a corresponding 
notion of complexity on $\pow(\Bai)$.  In particular, we will prove in Sections \ref{sectionborelamenability} and \ref{sectionDP} that all
these  degree-structures turn out
to look like the Wadge one (which 
can be determined as a particular instance
of our results).
The new key idea used in this paper is
 the notion of 
\emph{characteristic set} $\Delta_\F$ of the collection $\F$, which basically contains all the
subsets of $\Bai$ which are simple from the ``point of view'' of $\F$ (see Section \ref{sectionsetsofreductions} for the precise definition).
This tool is in some sense crucial for the study of the $\F$-hierarchy:
in fact, if one knows the characteristic set 
of a Borel-amenable set of functions (even without any other information about 
the set $\F$), then one can completely describe the hierarchy of degrees
induced by $\F$. As an application it turns out that for distinct $\F$ and $\G$, their degree-hierarchies coincide just in case 
$\Delta_\F = \Delta_\G$ (see Section 
\ref{sectionborelamenability} again). 
Moreover in Section \ref{sectionBAR} we will analyse the collection of all the Borel-amenable sets of functions and provide several examples (towards
this goal we will also  answer negatively to a question 
about generalizations of the
Jayne-Rogers Theorem posed by Andretta in his
\cite{andrettaslo}). Finally, in Section 
\ref{sectionsuccessor} we will show how to define
some operations which allow to construct, given a certain
degree in the $\F$-hierarchy, 
its successor degree(s): this will
give a more combinatorial description of the degree-structure 
induced by $\F$. 
In a future paper we will show, building on the results obtained in this paper, how to extend our analysis of Borel reductions to a wider class of sets of functions.

The present work is, in a sense, the natural extension of \cite{andrettamartin}, and we assume the reader is familiar with the arguments contained therein.


\section{Preliminaries}\label{sectionpreliminaries}

For the sake of precision our base theory will be  always
$\ZF+\ACOR$, 
and  we will specify  which
auxiliary axioms
are used for each statement. Nevertheless one should keep in mind that
\emph{all the  results
of this paper are true under $\AD+\DCR$}, or even just under $\SLOW+\NFS+\DCR$, where $\NFS$ is the statement\footnote{The 
axiom $\NFS$ is a consequence of both the statements ``every set of reals 
has the Baire property''
 and ``every set of reals is Lebesgue measurable''  (hence it is 
also a consequence of $\AD$), but it is weaker than them. Moreover it is 
consistent both with the Perfect Subset Property and its negation.}
``there are no flip-sets'' (recall that a subset $F$ of the Cantor space
$\Can$ is a \emph{flip-set} just in case for every $z,w \in \Can$ such 
that $\exists ! n (z(n) \neq w(n))$ one has $z \in F \iff w \notin F$). 
In the latter case, recall also from \cite{andrettaequivalence} and
\cite{andrettamoreonwadge} that $\SLOW+\NFS+\DCR$ is equivalent to $\SLOL
+\NFS+\DCR$.
Finally, one should also observe that all the 
``determinacy axioms'' are
used in a
\emph{local way} throughout the paper: this means that our results hold for the sets in some
  (suitable) pointclass $\Gamma \subseteq \pow(\Bai)$ whenever we assume that
  the corresponding axioms hold for sets in $\Gamma$. In particular, if
we are content to shrink our hierarchies to the Borel subsets 
of $\Bai$, then we only need
Borel-determinacy (i.e.\ the determinateness of those games 
whose pay-off set is Borel). 
Therefore our results hold also in $\ZFC$ if we completely restrict our attention to the Borel 
context, that is if we compare only Borel sets with Borel functions.

\subsection{Notation}

Our notation is quite standard --- for all undefined symbols and
notions we refer the reader to the standard monograph \cite{kechris}
and to the survey paper \cite{andrettaslo}. The set of the natural numbers
will be denoted by $\omega$. Given a pair of set $A,B$, we will denote by
${}^B A$  
the set of all the functions from $B$ to $A$. In particular, ${}^\omega A$ will denote the collection of all the $\omega$-sequences of elements of $A$, while the
collection of the \emph{finite} sequences of natural numbers will be denoted
by $\seqo$ (we will refer to the length of a sequence $s$  
 with the symbol $\leng(s)$). 
The space $\Bai$ (the collection of the 
$\omega$-sequences of natural numbers) is called \emph{Baire space}, and as customary
in this subject we will always identify $\RR$ with it,
that is we put $\RR =\Bai$.
 The Baire space
is endowed with the topology induced by 
 the metric defined by $d(x,y) = 0$ if $x=y$ and $d(x,y) = 2^{-n}$, where $n$ is least such that $x(n) \neq y(n)$, otherwise. 
In particular, the basic open neighborhood of $\RR$ are of the form
$\bN_s = \{x \in \RR \mid s\subseteq x\}$
(for some $s \in \seqo$). Given $A \subseteq \RR$ we put $\neg A = \RR \setminus A$, and if $s \in \seqo$ we
put
$s\conc A = \{s\conc x \mid x \in A\}$
 (when $s = \langle n \rangle$ we will simply write $n \conc A$).
 Given $A_n,A,B \subseteq \RR$ we define $\bigoplus\nolimits_n A_n = \bigcup\nolimits_n (n \conc A_n)$
and $A \oplus B = \bigoplus\nolimits_n C_n$,
where $C_{2k} = A$ and $C_{2k+1} = B$ for every $k \in \omega$. Moreover 
for any $n,k \in \omega$, we put
$\vec{n} = \langle n,n,n,\dotsc \rangle$
and $n^{(k)}= \langle \underbrace{n, \dotsc, n}_k \rangle$.

A \emph{pointclass} (for $\RR$) is simply a non-empty $\Gamma
\subseteq \pow(\RR)$, while a \emph{boldface
  pointclass} $\bGamma$ is a
pointclass  closed under continuous preimage. If $\bGamma$ is a boldface pointclass 
then so is its \emph{dual} $\breve{\bGamma} = \{ \neg A \mid A \in
\bGamma\}$.
A boldface pointclass is \emph{selfdual} if it
coincides with its dual, otherwise it is
nonselfdual.
Finally, recall that a  boldface pointclass $\bGamma$ is said to have
(or admits) a universal set if there is some $U
\subseteq \RR \times \RR$ which is universal for $\bGamma$ and such that the
image of $U$ under the standard homeomorphism $\RR \times \RR \simeq
\RR$ is in $\bGamma$.

Let $\Gamma$ be any pointclass and let $D \subseteq \RR$. A
\emph{$\Gamma$-partition} of $D$ is a family
 $\langle D_n \mid n \in \omega \rangle$ of pairwise disjoint sets
 of $\Gamma$ such that $D = \bigcup_{n\in \omega} D_n$, 
and it is said to be \emph{proper} if at least two of the $D_n$'s are nonempty.

Let 
$\F \subseteq {}^\RR \RR$ be any set of
functions. Let 
$f\colon\RR \to \RR$ be an
arbitrary function and $\mathcal{C} = \langle C_n \mid n \in \omega \rangle$
 be some partition of $\RR$. We say that $f$ is
\emph{(locally) in $\F$ on the partition
  $\mathcal{C}$}
if there is a family of functions $\{f_n \mid n \in \omega\} \subseteq \F$
such that $\restr{f}{C_n} = \restr{f_n}{C_n}$ for every $n$.
Moreover, if $\Gamma \subseteq \pow(\RR)$  is any pointclass,
 we will say that $f$ is
\emph{(locally) in $\F$ on a $\Gamma$-partition} if there is some
$\Gamma$-partition such that $f$ is locally in $\F$ on it.

Given a positive real number $C$, we denote by
 $\Lip(C)$ the collection of all functions $f\colon \RR \to
\RR$ which are Lipschitz with
constant less or equal than $C$ (observe that we can always assume
$C = 2^k$ for some $k \in \ZZ$), and put $\Lip =
\bigcup_{k \in \ZZ} \Lip(2^k)$. Moreover, since it plays a special role in
the
theory of reductions, we denote the set $\Lip(1)$ with the
special symbol $\L$. Finally, we denote by  $\W$ the set of the
continuous functions, 
by $\sf{D}_\xi$ (for some $1 \leq \xi < \omega_1$) the set of 
all the \emph{$\bDelta^0_\xi$-functions} (i.e.\ of those $f$
such that $f^{-1}(D) \in \bDelta^0_\xi$ for every $D \in \bDelta^0_\xi$
or, equivalently, such that 
$f^{-1}(S) \in \bSigma^0_\xi$ for every $S \in \bSigma^0_\xi$), 
and by $\Bor$ the set of all the Borel functions 
(but for simplicity of notation we will sometimes put
$\sf{D}_{\omega_1} = \Bor$).

\subsection{Reducibilities}

We recall here the terminology about reducibilities for sets of reals
as presented in \cite{andrettamartin} (with some minor modifications).
Given a family of functions $\F \subseteq
{}^\RR \RR$, we would like to
use the
functions from $\F$ as \emph{reductions} and say that for every pair
of sets $A,B
\subseteq \RR$, the set $A$ is \emph{$\F$-reducible to
  $B$}
($A \leq_\F B$,
in symbols) if and only if there is some function $f \in \F$ such that
$A = f^{-1}(B)$, i.e.\ such that $\forall x \in \RR(x \in A \iff f(x) \in
B)$. Notice that $A \leq_\F B \iff \neg A \leq_\F \neg B$. 
Clearly we can also introduce the strict relation corresponding to $\leq_\F$ by letting
$A <_\F B \iff {A \leq_\F B} \wedge {B \nleq_\F A}$.
 Since in order to have degrees we would like to have $\leq_\F$ be a preorder
(i.e.\
reflexive and transitive),
\emph{we will always assume without explicitly mentioning it that each set of functions considered is closed under composition and
contains the identity function $\id$}. Under this assumption, we
can consider the equivalence relation $\equiv_\F$ canonically induced by $\leq_\W$
and call \emph{$\F$-degree} any equivalence class of
$\equiv_\F$ ($ [A]_\F$
will denote the $\F$-degree of $A$). 
A set $A$ is \emph{$\F$-selfdual} if and only if
$A \leq_\F \neg A$ (if and
only if $A \equiv_\F \neg A$), otherwise it is
\emph{$\F$-nonselfdual}. Since selfduality is
invariant under $\equiv_\F$, the definition can be applied to $\F$-degrees as
well. The \emph{dual} of $[A]_\F$ is $[\neg A]_\F$, and a pair of
distinct degrees of the form $\{[A]_\F,[\neg A]_\F\}$ is a
\emph{nonselfdual pair}. The preorder
$\leq_\F$ canonically induces a partial order $\leq$ on the $\F$-degrees (the strict part of $\leq$ will be denoted by $<$). Notice also that if $\F \subseteq \G \subseteq {}^\RR \RR$, then the
preorder $\leq_\G$ is coarser than $\leq_\F$: hence $A \leq_\F B \imp
A \leq_\G B$, if $A$ is $\F$-selfdual then it is also $\G$-selfdual,
and $[A]_\F \subseteq [A]_\G$.

If $\F$ contains all the constant functions, then
$[\RR]_\F=\{ \RR\}$ and $[\emptyset]_\F= \{
\emptyset\}$ are the $<$-least $\F$-degrees and form a nonselfdual
pair. We say\footnote{For the sake of simplicity, all the terminology of
  this paragraph will be often applied in the obvious way to
  sets (rather than to $\F$-degrees).} that $[A]_\F$
is a \emph{successor
  degree} if there is a degree $[B]_\F < [A]_\F$
for which there is no $C \subseteq \RR$ such that $[B]_\F <
[C]_\F < [A]_\F$ (such an $[A]_\F$ will be called 
\emph{successor} of $[B]_\F$). If an $\F$-degree is not a
successor and it is neither $[\RR]_\F$ nor $[ \emptyset]_\F$ (where
$\F$ contains all the constant functions again),
then we say that it is a \emph{limit degree}. A
 degree $[A]_\F$ is of \emph{countable
  cofinality} if it is minimal in the collection
of the  upper
bounds of a family of degrees $\mathcal{A}=\{[A_n]_\F \mid n \in \omega
\}$ each of which is strictly smaller than $[A]_\F$, i.e.\ if $[A_n]_\F <
[A]_\F$ for every $n \in \omega$, and for
every $[B]_\F$ such that $[A_n]_\F \leq [B]_\F$ (for every $n \in
\omega$) we have that $[B]_\F \nless [A]_\F$ 
(observe that if $[A]_\F$ is limit the definition
  given here is equivalent to requiring that $[A]_\F$ is
minimal among the  upper bounds of a chain $[A_0]_\F < [A_1]_\F <
\dotsc$).
If this is not the case then
$[A]_\F$ (is limit and) is said to be of \emph{uncountable
  cofinality}. 

The \emph{Semi-Linear Ordering Principle} for $\F$ is
\begin{equation}
\tag{$\SLO^\F$}
\forall A,B \subseteq \RR
({A \leq_\F B} \vee
  {\neg B \leq_\F A}).
\end{equation}

This principle implies that
if $A$ is $\F$-selfdual then 
$[A]_\F$ is comparable with all the other $\F$-degrees, while if
$A$ and $B$ are $\leq_\F$-incomparable
then  $\{[A]_\F,[B]_\F\}$ is a nonselfdual pair, i.e.\ $[B]_\F = [\neg A]_\F$.
Thus the ordering induced on the $\F$-degrees is \emph{almost} a
linear-order: this is the reason for which the principle $\SLO^\F$ is called 
``Semi-Linear Ordering Principle'' for $\F$.

\begin{lemma} \label{lemmabasic}
Let $\F \subseteq \G$ be two sets of functions from $\RR$ to
$\RR$. Then
$\SLO^\F \imp \SLO^\G$ and if we assume $\SLO^\F$
\[ \forall A,B \subseteq \RR (A<_\G B \imp A <_\F B).\]
\end{lemma}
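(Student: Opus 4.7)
The plan for the first implication is essentially trivial. I would fix arbitrary $A,B\subseteq\RR$ and apply $\SLO^\F$ to obtain $A\leq_\F B$ or $\neg B\leq_\F A$; since the excerpt already notes that $\F\subseteq\G$ makes $\leq_\G$ coarser than $\leq_\F$ (any $f\in\F$ is an $f\in\G$), the corresponding disjunct for $\G$ follows, yielding $\SLO^\G$.

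For the second implication, I would assume $\SLO^\F$ together with $A<_\G B$, and break the conclusion $A<_\F B$ into its two conjuncts $A\leq_\F B$ and $B\nleq_\F A$. The second conjunct is immediate: from $B\nleq_\G A$ and $\F\subseteq\G$, reducibility via a function in $\F$ would already witness reducibility via a function in $\G$, so $B\nleq_\F A$ as well.

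The substantive point is $A\leq_\F B$. I would argue by contradiction: if $A\nleq_\F B$, then $\SLO^\F$ gives $\neg B\leq_\F A$, hence $\neg B\leq_\G A$. Now I combine this with $A\leq_\G B$ using the duality principle $C\leq_\G D \iff \neg C\leq_\G\neg D$ (which is just the observation that the same reducing function works for complements). Dualizing $\neg B\leq_\G A$ gives $B\leq_\G\neg A$, and dualizing $A\leq_\G B$ gives $\neg A\leq_\G \neg B$; chaining yields $B\leq_\G\neg A\leq_\G\neg B$, so $B$ is $\G$-selfdual. Composing once more with $\neg B\leq_\G A$ produces $B\leq_\G A$, contradicting the hypothesis $B\nleq_\G A$. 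Hence $A\leq_\F B$, completing the proof.

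The only point requiring any care is keeping the duality bookkeeping straight when chaining the reductions; there is no real obstacle, as the argument is a short symbolic manipulation resting entirely on the two already-stated facts $A\leq_\F B \iff \neg A\leq_\F\neg B$ and $\F\subseteq\G \imp (\leq_\F\subseteq\leq_\G)$.
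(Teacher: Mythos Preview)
Your proof is correct and follows essentially the same route as the paper's: both establish $B\nleq_\F A$ trivially from $\F\subseteq\G$, and both derive $A\leq_\F B$ by contradiction via $\SLO^\F$, obtaining $\neg B\leq_\G A$ and then using the selfduality of $B$ to reach $B\leq_\G A$. The paper compresses the selfduality step into the single assertion ``$A<_\G\neg B$'' (whose justification is exactly the chain you spell out), so your version is simply a more explicit rendering of the same argument.
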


\begin{proof}
The first part is obvious, since $A \leq_\F B \imp A \leq_\G B$. For
the second one, notice that $B
\nleq_\F A$ (otherwise $B \leq_\G A$). Since $\SLO^\F
\imp\SLO^\G$, $A <_\G \neg B$: if $A \nleq_\F B$ then $\neg B
\leq_\F A$ by $\SLO^\F$, and hence $\neg B \leq_\G A$, a contradiction!
\end{proof}

Recall also that if $\F$ is not too large and
$\SLO^\F$ holds,
then there is a uniform way to construct from a set $A \subseteq \RR$
a new set $J_\F(A)$ which is $\F$-larger than $A$ and $\neg A$ ($J_\F$
is also called \emph{Solovay's jump operator}). 

\begin{lemma}[Solovay]\label{lemmajumpoperator}
  Suppose that there is a
  surjection $j \colon \RR \onto \F$ and that $\SLO^\F$ holds. Then there is
  a map $J = J_\F\colon\pow(\RR) \to \pow(\RR)$ such that
\[ \forall A \subseteq \RR ({A<_\F J(A)} \wedge {\neg A <_\F J(A)}).\]
\end{lemma}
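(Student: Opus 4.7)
The plan is classical Cantor-style diagonalization against the enumeration of $\F$ provided by $j$. Writing $f_y := j(y)$, I would assign to each $A \subseteq \RR$ the diagonal set
\[ D_A := \{ y \in \RR : f_y(y) \notin A \} \]
and set $J_\F(A) := D_A \oplus A$. Four properties must then be verified: $A \leq_\F J_\F(A)$, $\neg A \leq_\F J_\F(A)$, $J_\F(A) \nleq_\F A$, and $J_\F(A) \nleq_\F \neg A$.

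The diagonal step $D_A \nleq_\F A$ is the heart of the matter: were $g = f_{y^*} \in \F$ to witness such a reduction, then evaluating the defining equivalence at $y^*$ would give $y^* \in D_A \iff f_{y^*}(y^*) \in A$, flatly contradicting the definition of $D_A$. Applying $\SLO^\F$ to the pair $(D_A, A)$ then forces $\neg A \leq_\F D_A$, and the Lipschitz reductions $D_A, A \leq_\F D_A \oplus A$ (through $y \mapsto 0 \conc y$ and $x \mapsto 1 \conc x$) yield both of the required upper bounds. For strictness, $J_\F(A) \leq_\F A$ would combine with $D_A \leq_\F J_\F(A)$ to revive $D_A \leq_\F A$, which has just been excluded.

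The clause $J_\F(A) \nleq_\F \neg A$ is where I would split on selfduality of $A$: if $A$ is $\F$-selfdual then $[\neg A]_\F = [A]_\F$ and the claim reduces to the case just handled, while if $A$ is $\F$-nonselfdual then composing $J_\F(A) \leq_\F \neg A$ with $A \leq_\F J_\F(A)$ would give $A \leq_\F \neg A$, contradicting nonselfduality. The only unstated ingredient is that the concatenation maps $x \mapsto n \conc x$ underlying $\oplus$ actually belong to $\F$; this is automatic whenever $\Lip \subseteq \F$, and hence harmless in the Borel-amenable context that is the actual subject of the paper. I do not anticipate a real obstacle beyond keeping the selfdual and nonselfdual branches of the final split straight and being careful to invoke $\SLO^\F$ only on the pair $(D_A, A)$, where the diagonal argument has already broken one of its two alternatives.
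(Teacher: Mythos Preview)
The paper states this lemma without proof (it is attributed to Solovay and treated as folklore), so there is no in-paper argument to compare against. Your diagonalization is the standard one and is correct: the key point $D_A \nleq_\F A$ goes through exactly as you describe, and the remaining inequalities follow from $\SLO^\F$, closure under composition, and the Lipschitz embeddings into $D_A \oplus A$. Your caveat about needing the maps $x \mapsto n\conc x$ to lie in $\F$ is well placed; the paper only ever invokes the lemma for sets of reductions (Definition~\ref{defsetofreductions}), where $\L \subseteq \F$ is part of the hypothesis, so this is not a gap in the intended applications.
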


\subsection{Wadge and Lipschitz degrees}

\emph{We will assume $\SLOL+\NFS+\DCR$ for the rest of this Section} and state (without proof) some basic facts which will be useful for the next Sections. For more details see \cite{wadgethesis}, \cite{vanwesepwadgedegrees} or \cite{andrettaslo}.
 
If $A_0, A_1, \dotsc \subseteq \RR$ 
are such that $\forall n \exists m>n (A_m \nleq_\L A_n)$,
then $\bigoplus_n A_n$ is $\L$-selfdual and 
$[\bigoplus_n A_n]_\L$ is the least upper bound of the
$[A_n]_\L$'s. In particular, 
$A \oplus \neg A$ is always $\L$-selfdual and 
$[A \oplus \neg A]_\L$ is the least degree above $[A]_\L$ and
$[\neg A]_\L$. Moreover, after a selfdual $\L$-degree there is always another selfdual $\L$-degree, and 
a limit
$\L$-degree is selfdual if and only if it is of countable
cofinality, otherwise it is nonselfdual. 
Finally, the $\L$-hierarchy is well-founded 
($\rank{A}_\L$ will denote the canonical rank of the set $A$ with respect to $\leq_\L$)
and its antichains have size at most $2$. Therefore the Lipschitz hierarchy looks like this:
\begin{small}
\begin{equation*}\label{pictureLipschitz}\index{Hierarchy!Lipschitz}
\begin{array}{llllllllll}
\bullet & & \bullet & & \bullet & & & &\bullet
\\
& \smash[b]{\underbrace{\bullet \; \bullet \;
\bullet \; \cdots}_{\omega_1}} & &
\smash[b]{\underbrace{\bullet \; \bullet \; \bullet \;
\cdots}_{\omega_1}} & & \cdots\cdots &
\bullet & \cdots\cdots & & \cdots\cdots
\\
\bullet & & \bullet & & \bullet & & & & \bullet
\\
& & & & & &
\stackrel{\uparrow}{\makebox[0pt][r]{\framebox{${\rm cof} = \omega$}}} & &
\, \stackrel{\uparrow}{\makebox[0pt][l]{\framebox{${\rm cof} > \omega$}}}
\end{array}
\end{equation*}
\end{small}

The description of the Wadge hierarchy can be obtained from the
Lipschitz one using
the Steel-Van Wesep Theorem (see e.g.\ Theorem 3.1 in \cite{vanwesepwadgedegrees}).
A degree
$[A]_\W$ is nonselfdual if and only if $[A]_\L$ is nonselfdual
    (and in this case $[A]_\W=[A]_\L$), while every selfdual degree $[A]_\W$ 
is exactly the union of an  $\omega_1$-block of consecutive Lipschitz degrees: therefore
nonselfdual pairs and single selfdual degrees alternate in the $\W$-hierarchy. 
Moreover, the $\W$-hierarchy is well-founded and at
  limit levels of countable cofinality there is a single selfdual 
  degree, while at limit levels of uncountable cofinality there is a
  nonselfdual pair.
Hence the structure of the Wadge degrees looks like this:

\begin{small}
\begin{equation*}\label{pictureWadge}\index{Hierarchy!Wadge}
\begin{array}{llllllllllllll}
\bullet & & \bullet & & \bullet & & & & \bullet & & & \bullet
\\
& \bullet & & \bullet & & \bullet & \cdots \cdots & \bullet
& & \bullet &\cdots\cdots & & \bullet & \cdots
\\
\bullet & & \bullet & & \bullet & & & & \bullet & & & \bullet
\\
& & & & & & &
\, \makebox[0pt]{$\stackrel{\uparrow}{\framebox{${\rm cof} = \omega$}}$}
& & & &
\, \makebox[0pt]{$\stackrel{\uparrow}{\framebox{${\rm cof} > \omega$}}$}
\end{array}
\end{equation*}
\end{small}

Finally, recall also that
the length of the Wadge hierarchy (as well as the length of the Lipschitz one)
is $\Theta = \sup\{\alpha \mid \text{there is a surjective }f\colon\RR \onto
\alpha\}$.

\section{Sets of reductions}
\label{sectionsetsofreductions}

The idea behind the next definition is that we would like to use the
functions from some $\F \subseteq{}^\RR \RR$ as \emph{reductions}, and
 study the relation
$\leq_\F$. In order to have a
nontrivial structure, we must require $\F$ to be neither too small nor
too large. For instance, we can not let $\F$ be the set of all constant
functions plus the identity function (the
latter must be adjoined to make $\leq_\F$ be a preorder), since in this case 
for different
$A,B \subseteq \RR$ we can have $A \leq_\F B$ only if $A = \RR$ and $B
\neq \emptyset$ or $A = \emptyset$ and $B \neq \RR$ (in all the other
cases we have $A \nleq_\F B$). 
Thus we get a degree-structure which is not 
very interesting, and the reason is basically
that we have few functions to reduce one set to another: we can
avoid this unpleasant situation requiring that $\F$ contains a
very simple but sufficiently rich set of functions, such as the
set $\L$ (note that this condition already implies $\id \in \F$). 
On the other hand, we want also to avoid that $\F$ contains too many
functions. In fact, if for example we consider the set $\F$ of
\emph{all} the functions from $\RR$ to $\RR$ we have a lot of functions at our disposal,  and we can
reduce every
set to any other (except for $\emptyset$ and $\RR$), therefore we get a finite and trivial structure of $\F$-degrees (the same structure can be obtained considering any set $\F$ which
contains all the
two-valued functions).
To avoid this situation we can require that
$\F$ is not too large,
i.e.\ that there is a surjection
$j\colon\RR \onto \F$.\label{smallness}
All these considerations naturally lead to the following definition.

\begin{defin} \label{defsetofreductions}\index{Set of reductions}
A set of functions $\F \subseteq {}^\RR \RR$ is a \emph{set of
  reductions} if it is closed under composition, $\F \supseteq \L$
and there is a
surjection $j\colon\RR \onto \F$.
\end{defin}

Classical examples of sets of reductions are $\L$, $\W$,  $\sf{D}_\xi$, 
$\Bor$ and so on.
It is interesting to note that this simple definition allows to
describe almost completely the structure of the $\F$-degrees,
 as it is shown in the next Theorem.

\begin{theorem} [\SLOL+\NFS+\DCR]\label{theorgeneralproperties}
Let $\F \subseteq {}^\RR \RR$ be a set of reductions. Then
\begin{enumerate}[i)]
\item $\leq_\F$ is a well-founded preorder on $\pow(\RR)$;
\item there is no $\leq_\F$-largest set and $\leng(\leq_\F)= \Theta$;
\item anti-chains have length at most $2$ and are of the form
  $\{A,\neg A\}$ for some set $A$;
\item $\RR \nleq_\F \neg \RR = \emptyset$ and if $A \neq \emptyset,
  \RR$ then $\emptyset, \RR <_\F A$;
\item if $A \nleq_\F \neg A$ then $A \oplus \neg A$ is $\F$-selfudal
  and is the successor of both $A$ and $\neg A$. In particular, after an $\F$-nonselfdual pair
  there is a single $\F$-selfdual degree;
\item if $A_0 <_\F A_1 <_\F \dotsc$ is an $\F$-chain of subsets of
  $\RR$ then $\bigoplus_n A_n$ is $\F$-selfdual and is
  the supremum of these
  sets. In particular if $[A]_\F$ is
  limit of countable cofinality then $A \leq_\F \neg A$;
\item if $A \nleq_\F \neg A$ and $\G \subseteq \F$ is another set of reductions
 then $[A]_\F = [A]_\G$. In particular, $[A]_\F = [A]_\L$.
\end{enumerate}
\end{theorem}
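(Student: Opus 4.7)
The plan is to bootstrap every item from what is already known about the Lipschitz hierarchy in Section \ref{sectionpreliminaries}. Two ingredients will do most of the work. First, since $\F \supseteq \L$ and $\SLOL$ holds, $\SLO^\F$ follows from Lemma \ref{lemmabasic}, and constants belong to $\L \subseteq \F$ (being Lipschitz with constant $0$). Second, the same Lemma supplies the crucial lifting $A <_\F B \imp A <_\L B$, so that every strict $\F$-chain is also a strict $\L$-chain.

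I would first dispose of (i), (iii), (iv). Well-foundedness of $\leq_\F$ inherits from that of $\leq_\L$ via the lifting. If $A, B$ are $\leq_\F$-incomparable, two applications of $\SLO^\F$ give $\neg B \leq_\F A$ and $\neg A \leq_\F B$, whence $B \equiv_\F \neg A$, yielding antichains of size at most $2$ of the stated form. For $A \neq \emptyset, \RR$, suitable constants exhibit $\emptyset, \RR \leq_\F A$, while $\RR \nleq_\F \emptyset$ holds trivially since no $f$ can satisfy $x \in \RR \iff f(x) \in \emptyset$.

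Next I would prove (v), (vi), (vii) by transferring the analogous Lipschitz statements. For (v), assume $A \nleq_\F \neg A$. Then $A \oplus \neg A$ is $\L$-selfdual and the $\L$-least upper bound of $[A]_\L, [\neg A]_\L$ (Lipschitz facts), and is therefore $\F$-selfdual and strictly $\F$-above $A$ and $\neg A$. If $[A]_\F < [C]_\F \leq [A \oplus \neg A]_\F$, applying $\SLO^\F$ to $\neg A$ and $C$ either gives $C \leq_\F A$ (contradicting $A <_\F C$) or $\neg A \leq_\F C$; in the latter case both $A, \neg A <_\F C$ must be strict (else $A \equiv_\F \neg A$), so Lemma \ref{lemmabasic} gives $A, \neg A <_\L C$, the $\L$-supremum property yields $A \oplus \neg A \leq_\L C$, and thus $[C]_\F = [A \oplus \neg A]_\F$. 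Item (vi) is analogous: the chain $A_0 <_\F A_1 <_\F \dotsc$ lifts to a strict $\L$-chain, so $\bigoplus_n A_n$ is $\L$-selfdual and the $\L$-least upper bound; for any $\F$-upper bound $B$ one has $A_n <_\F B$ for all $n$ by strict monotonicity, hence $A_n <_\L B$, whence $\bigoplus_n A_n \leq_\L B$. For (vii), if $A \equiv_\F B$ then Lemma \ref{lemmabasic} rules out strict $\G$-comparability in either direction, so $A, B$ are $\G$-equivalent or $\G$-incomparable; the second case would force $B \equiv_\G \neg A$ by $\SLO^\G$, hence $A \equiv_\F \neg A$, against the hypothesis.

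Finally I turn to (ii). Lemma \ref{lemmajumpoperator} applies to $\F$ and produces $A <_\F J_\F(A)$, so there is no $\leq_\F$-largest set; the upper bound $\leng(\leq_\F) \leq \Theta$ comes from lifting $\F$-chains to $\L$-chains. The matching lower bound will be obtained by iterating $J_\F$ along $\Theta$-many stages, using (vi) at countable-cofinality limits, and this is where the hard part will be: at limits of uncountable cofinality, $\bigoplus$ is not available, and one must instead exploit the well-foundedness of $\leq_\L$ together with the fact that $\Theta$ is the Lipschitz length to argue that the iterated-jump sequence cannot be cofinally trapped below any fixed $\L$-rank, hence produces an $\F$-chain of full length $\Theta$.
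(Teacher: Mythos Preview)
Your overall strategy matches the paper's: bootstrap from the Lipschitz theory via Lemma~\ref{lemmabasic}. Items (i), (iii)--(vi) are handled correctly and essentially as in the paper (the paper additionally remarks that passing from ``no infinite descending $\F$-chain'' to well-foundedness uses $\DCR$ together with the surjection $j\colon \RR \onto \F$, a point you should make explicit).

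There is, however, a genuine error in your argument for (vii). You claim that ``Lemma~\ref{lemmabasic} rules out strict $\G$-comparability'' from $A \equiv_\F B$, but the lemma points the other way: with $\G \subseteq \F$ it gives (under $\SLO^\G$) that $A <_\F B \imp A <_\G B$, not the converse. So $A \equiv_\F B$ does \emph{not} by itself preclude $A <_\G B$; the witness for $B \leq_\F A$ may well lie in $\F \setminus \G$. The paper handles this case differently, via part (v): assuming $B <_\G A$ (note $B$ is $\G$-nonselfdual, else $B$ and hence $A$ would be $\F$-selfdual), $B \oplus \neg B$ is the $\G$-successor of $B$, so $B \oplus \neg B \leq_\G A$; but then $A \equiv_\F B \oplus \neg B$ would be $\F$-selfdual, a contradiction (and symmetrically for $A <_\G B$). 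Your line can also be repaired directly without invoking (v): if $B <_\G A$ then $A \nleq_\G B$, so $\SLO^\G$ gives $\neg B \leq_\G A$, hence $\neg B \leq_\F A \equiv_\F B$, making $B$ (and thus $A$) $\F$-selfdual --- contradiction.

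On (ii): the paper, like you, obtains ``no largest set'' from the jump and the bound $\leng(\leq_\F) \leq \Theta$ from the rank comparison, but for the full equality it simply defers to the standard argument in \cite{andrettaslo}. Your proposed transfinite iteration of $J_\F$ is not that argument and, as you yourself flag, genuinely stalls at limits of uncountable cofinality; the standard route does not attempt to build an explicit $\Theta$-long chain of sets.
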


\begin{proof}

Part \textit{iv)} follows from the fact that $\F$ contains all the constant functions (which
  are in $\L$). For \textit{ii)} note that since $\L \subseteq \F$ and we have 
  $\SLOL$, we can also
  assume $\SLO^\F$ by Lemma \ref{lemmabasic}. Thus, using the
  surjection $j: \RR \onto \F$, we can define the Solovay's jump operator $J_\F$ and
  use  Lemma \ref{lemmajumpoperator} to get the result with the standard argument (see e.g.\ Theorem 2.7 in \cite{andrettaslo}).
Part \textit{iii)} immediately follows from $\SLO^\F$. For part \textit{vi)} and 
part \textit{i)}, use the fact the each strict inequality with respect to $\F$-reductions
can be converted in a strict inequality with respect to $\L$-reductions
by $\SLOL$ and Lemma \ref{lemmabasic}: therefore the (proper) chain in part \textit{vi)} 
can be converted in a (proper) chain with respect to $\L$ (and this gives that
$\bigoplus_n A_n$ is selfdual and the supremum of the chain), while
in part \textit{i)} any descending $\F$-chain can be converted in a descending $\L$-chain
(the fact that the non-existence of a descending $\F$-chain implies well-foundness 
can be proved as in \cite{andrettamartin}, using  $\DCR$ and the
  surjection $j \colon \RR \onto \F$). For part \textit{v)},
observe that it can not be the case that $A \leq_\L \neg A$ (otherwise we should have also
$A \leq_\F \neg A$), and therefore $A \oplus \neg A$ is selfdual and is the immediate 
successor of $A$ and $\neg A$. Finally, for part \textit{vii)} we clearly
have $[A]_\G \subseteq [A]_\F$. Towards a contradiction,
  assume that $B$ is not $\G$-equivalent to $A$ but $B \in
  [A]_\F$. Note that $B$ can not be $\G$-equivalent to $\neg A$
  (otherwise, $A
  \equiv_\F \neg A$), hence we have only two cases (we sistematically use
part \textit{v)}): if $B <_\G A$ then we would have that $B \leq_\G B \oplus \neg B \leq_\G
  A$. But then $A \equiv_\F B \oplus \neg B$ would be
  $\F$-selfdual, a contradiction! If $A <_\G B$ simply argue as above but replacing the role of $B
  \oplus \neg B$ with $A \oplus \neg A$ to get the same contradiction! 
\end{proof}

It is useful to observe that in the previous
Theorem part
\textit{iv)} is provable in $\ZF$, while parts
\textit{ii)}-\textit{iii)} are
true under $\SLO^\F$ alone.
There are essentially two points left open by Theorem \ref{theorgeneralproperties} in the
description of the hierarchy of the $\F$-degrees, namely:
\begin{description}
\item[Question 2] What happens at limit levels of uncountable
cofinality?

\item[Question 3] What happens after a selfdual degree?
\end{description}

In order to answer these two Questions, we first introduce some useful definitions and an important
tool strictly related to the set of
reductions $\F$.

From now on any pointclass closed under $\L$-preimages will be
called \emph{\L-pointclass} (note that, in particular, any
boldface pointclass is an $\L$-pointclass). More generally, if $\F$ is
any \emph{set of functions}, any pointclass closed under $\F$-preimages will
be called \emph{$\F$-pointclass}
(and clearly if
$\F \subseteq \G$ then every $\G$-pointclass is also an $\F$-pointclass).

Moreover if $\Gamma$ is any pointclass we will say that  $f\colon \RR \to \RR$ is
a \emph{$\Gamma$-function} if
$f^{-1}(D) \in \Gamma$ for every $D \in \Gamma$ (this is clearly a natural generalization of
the notion of $\bDelta^0_\xi$-function
introduced before).

\begin{defin}
We will call \emph{characteristic set of
    $\F \subseteq {}^{\RR} \RR$} the pointclass 
\[ \Delta_\F =\{ A \subseteq \RR \mid  A \leq_\F \bN_{\langle 0
  \rangle}\}.\]
\end{defin}

Note if $\F$ is closed under composition $\Delta_\F$ is always an $\F$-pointclass, and
that every $f \in \F$ is automatically a $\Delta_\F$-function (as we will see in some examples in
Section \ref{sectionBAR}, the converse is not always true even if 
we assume that $\F$ is a set of reduction). Moreover, if $\L \subseteq \F$ then $\Delta_\F$ is selfdual 
and is also an $\L$-pointclass.

\begin{defin}
A set of functions $\F$ is \emph{saturated} if for every $f\colon \RR
\to \RR$
\[ f \in \F \iff f \text{ is a }\Delta_\F\text{-function.}\]
\end{defin}
 We will  call the set $\Sat(\F) = \{ f \in {}^\RR \RR \mid  f \text{ is
   a }\Delta_\F\text{-function}\}$ the \emph{saturation} of
 $\F$.
Clearly if $\F$ is closed under composition $\F \subseteq \Sat(\F)$, and $\F = \Sat(\F)$ just
 in case $\F$ is saturated. Moreover if $\id \in \F$ then $\Delta_{\Sat(\F)} =
 \Delta_\F$. In fact, $\Delta_\F \subseteq \Delta_{\Sat(\F)}$ by the
 previous observation. Conversely, let $A \in \Delta_{\Sat(\F)}$. By
 definition there is some $f \in \Sat(\F)$ such that $A =
 f^{-1}(\bN_{\langle 0\rangle})$, and since $f$ is a
 $\Delta_\F$-function and $\bN_{\langle 0 \rangle} \in \Delta_\F$ we
 have also $A \in \Delta_\F$. Thus $\Sat(\F)$ is a maximum (with respect
 to inclusion) among those sets of reductions $\G$ such that
 $\Delta_\G = \Delta_\F$.

\begin{remark}\label{rem3.2}
Assume that $\F$ is a set of functions 
(but not necessarily a set of reductions). Assuming $\SLOL$ and that each $\bN_s$ is in
$\Delta_\F$, we get that if
$\Delta_\F$ is \emph{bounded} in the Lipschitz hierarchy (i.e.\ if there is some
$B \subseteq \RR$ such that $D \leq_\L B$ for every $D \in \Delta_\F$) then
there is a surjection $j \colon  \RR \onto \F$. In fact, we can take the bound $B$ to
be such that $B \nleq_\W \neg B$ and
$\bGamma = \{ A \subseteq \RR \mid A \leq_\L
B\}$ is a boldface pointclass which contains all the countable intersections
of sets in $\Delta_\F$ (so that, in particular, we
have that $\bGamma$ admits a universal set $U \subseteq \RR \times \RR$ --- see e.g.\ 
Theorem 3.1 in \cite{andrettaslo}). Since
\[ (x,y) \in {\rm graph}(f) \iff \forall s \in \seqo (y \in \bN_s \imp x \in f^{-1}
(\bN_s)),\]
we have that the graph of any $f \in \F$
 is in $\bGamma$. 
Now let $f_0$ be any fixed function in $\F$ and for every $x \in \RR$ let
$j(x) = f$ if $h^{-1}(U_x) = {\rm graph}(f)$, and $j(x) = f_0$ otherwise: 
it is not hard to check that $j$ is the surjection
required.

Conversely, assume that $\Delta_\F$ is unbounded (i.e.\ ``cofinal'') in
the Lipschitz hierarchy
and that there is a surjection $j \colon  \RR \onto \F$. Let $i$ be a
surjection of $\RR$ onto $\L$ and for every $x,y \in \RR$ put $f_x = j(x)$ and
$l_y = i(y)$. Now assume $\SLOL+\NFS+\DCR$ and define
$s \colon  \RR \to \Theta \colon  x \oplus y \mapsto
\rank{l_y^{-1}(f_x^{-1}(\bN_{\langle 0 \rangle}))}_\L$.
It is easy to check that $s$ is onto, contradicting the definition of $\Theta$.

Therefore, under $\SLOL+\NFS+\DCR$ any set of functions $\F$ closed under
composition and such that $\bN_s \in \Delta_\F$ for every $s \in \seqo$
admits a surjection $j\colon \RR \onto \F$ if and only
if $\Delta_\F$ is bounded in the
Lipschitz (equivalently, Wadge)
hierarchy. In particular, if $\F \supseteq \L$ and $\F$ satisfies
the
previous conditions, then it is a set of reductions if and only if $\Delta_\F
\neq \pow(\RR)$ (hence the unique saturated set of functions closed under
composition and  which contains $\L$ but is not a set of
reductions is the set of \emph{all} the functions
from $\RR$ to $\RR$). This means that the condition of ``smallness'' on 
the sets
of functions presented at page \pageref{smallness} could be reformulated as
``$\Delta_\F$ bounded in the Lipschitz hierarchy'' or simply ``$\Delta_\F \neq 
\pow(\RR)$''.
\end{remark}

Now we return to consider \emph{sets of reductions}.
It is clear  that  $\F \subseteq \G \imp \Delta_\F \subseteq
\Delta_{\G}$, but the converse is not always true, as we will see in
some examples in Section \ref{sectionBAR}.
Nevertheless, there is a noteworthy situation in
which $\Delta_\F \subseteq \Delta_\G$ implies $\F \subseteq \G$,
namely when $\G=\sf{D}_\xi$ (for some
nonzero $\xi<\omega_1$), although in this case we must
also require that $\bN_s \in \Delta_\F$ for every $s \in
\seqo$ (we will see that this is always the case if
$\F$ is a \emph{Borel-amenable} set of reductions). First notice that
$\Delta_{\sf{D}_\xi} = \bDelta^0_\xi$ (and $\Delta_\Bor = \bDelta^1_1$). 

\begin{proposition}\label{propDeltaxilevels}
  Let $\F$ be a set of reductions such that
each $\bN_s$ is in $\Delta _\F$, and let
$\xi < \omega _1$ be a nonzero ordinal.
If $\Delta_\F \subseteq \bDelta^0_\xi$
then $\F \subseteq \sf{D}_\xi$. 
\end{proposition}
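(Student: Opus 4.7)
Fix $f \in \F$. The aim is to show $f \in \sf{D}_\xi$, which by definition means $f^{-1}(\bSigma^0_\xi) \subseteq \bSigma^0_\xi$ (equivalently $f^{-1}(\bDelta^0_\xi) \subseteq \bDelta^0_\xi$).

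The starting point is the general fact, noted in the paper right after the definition of $\Delta_\F$, that $\Delta_\F$ is automatically an $\F$-pointclass whenever $\F$ is closed under composition. Explicitly, any $D \in \Delta_\F$ has the form $D = h^{-1}(\bN_{\langle 0 \rangle})$ for some $h \in \F$, and then $f^{-1}(D) = (h \circ f)^{-1}(\bN_{\langle 0 \rangle}) \in \Delta_\F$. Combining this with the two hypotheses $\bN_s \in \Delta_\F$ and $\Delta_\F \subseteq \bDelta^0_\xi$ yields the key initial observation that $f^{-1}(\bN_s) \in \bDelta^0_\xi$ for every $s \in \seqo$.

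The plan is then to argue by transfinite induction on $\eta$ in the range $1 \leq \eta \leq \xi$ that $f^{-1}(\bSigma^0_\eta) \subseteq \bSigma^0_\xi$ (equivalently, by complementation, $f^{-1}(\bPi^0_\eta) \subseteq \bPi^0_\xi$); the instance $\eta = \xi$ is exactly the conclusion. The base case $\eta = 1$ is straightforward: any open $U = \bigcup_n \bN_{s_n}$ has $f^{-1}(U) = \bigcup_n f^{-1}(\bN_{s_n})$ expressed as a countable union of $\bDelta^0_\xi$ sets, which therefore lies in $\bSigma^0_\xi$.

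The main obstacle is the successor step of this induction. If $S = \bigcup_n P_n \in \bSigma^0_\eta$ with each $P_n \in \bPi^0_{\eta_n}$ and $\eta_n < \eta \leq \xi$, then the raw inductive hypothesis yields only $f^{-1}(P_n) \in \bPi^0_\xi$, and a countable union of $\bPi^0_\xi$ sets a priori belongs only to $\bSigma^0_{\xi+1}$ rather than $\bSigma^0_\xi$. To close this gap the induction must be strengthened to $f^{-1}(\bSigma^0_\eta), f^{-1}(\bPi^0_\eta) \subseteq \bDelta^0_\xi$ for all $\eta < \xi$, retaining the weaker $\bSigma^0_\xi$/$\bPi^0_\xi$ bound only at the top level $\eta = \xi$. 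Under this strengthening the step closes neatly, since each $P_n$ then satisfies $\eta_n < \xi$, so that $f^{-1}(P_n) \in \bDelta^0_\xi \subseteq \bSigma^0_\xi$ and the countable union stays in $\bSigma^0_\xi$. The remaining delicate point is then the reinforced base case $f^{-1}(U) \in \bDelta^0_\xi$ for every open $U$ when $\xi \geq 2$, where the bare observation $f^{-1}(\bN_s) \in \bDelta^0_\xi$ no longer suffices: one must exploit the finer fact that $f^{-1}(\bN_s) \in \Delta_\F$ together with the selfduality of $\Delta_\F$ and its closure under $\L$-preimages, properties which, under the ambient $\SLOL + \NFS + \DCR$, constrain $\Delta_\F$ tightly within the Wadge hierarchy.
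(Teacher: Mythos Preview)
Your strengthened induction does not actually close at the intermediate levels $1 < \eta < \xi$, and for exactly the same reason the original one failed: writing $S \in \bSigma^0_\eta$ as $\bigcup_n P_n$ with $P_n \in \bPi^0_{\eta_n}$, the inductive hypothesis gives only $f^{-1}(P_n) \in \bDelta^0_\xi$, and a countable union of $\bDelta^0_\xi$ sets lies in $\bSigma^0_\xi$, not in $\bDelta^0_\xi$ as required. So the difficulty you isolate at the ``reinforced base case'' reappears verbatim at every step below $\xi$; the induction scheme, as written, proves nothing beyond the top level $\eta = \xi$.

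More seriously, you never actually establish the strengthened hypothesis anywhere: the final sentence gestures at a Wadge-theoretic comparison but stops short of carrying one out. The paper supplies precisely the missing idea, and it makes the induction on $\eta$ unnecessary. First, one reduces to the case where $\xi$ is \emph{least} with $\Delta_\F \subseteq \bDelta^0_\xi$ (harmless, since $\sf{D}_\xi \subseteq \sf{D}_{\xi'}$ whenever $\xi \leq \xi'$). Then, for each $\nu < \xi$, Borel determinacy yields $\SLOL$ for Borel sets and hence the dichotomy $\bDelta^0_\nu \subseteq \Delta_\F$ or $\Delta_\F \subseteq \bDelta^0_\nu$ between the two selfdual $\L$-pointclasses; minimality of $\xi$ rules out the second alternative, so in fact $\bPi^0_\nu \subseteq \Delta_\F$. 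Since every $f \in \F$ is a $\Delta_\F$-function, this gives $f^{-1}(\bPi^0_\nu) \subseteq \Delta_\F \subseteq \bDelta^0_\xi$ for all $\nu < \xi$ in one stroke, and the conclusion $f^{-1}(\bSigma^0_\xi) \subseteq \bSigma^0_\xi$ follows immediately. The ingredient you were missing is the minimality reduction: without it there is no way to turn the Wadge-comparability you invoke into the actual inclusion $\bPi^0_\nu \subseteq \Delta_\F$.
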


\begin{proof}
  Since $\sf{D}_\xi \subseteq \sf{D}_{\xi'}\) for $\xi < \xi'$,
we may assume that $\xi$ is least such that
$\Delta_\F \subseteq \bDelta^0_\xi$.
Suppose $\xi  > 1$ and let $f \in \mathcal{F}$: by definition
of $\sf{D}_\xi$ we must show that if $A$ is
$\bSigma^0_\xi$, then so is $f ^{-1} ( A )$.
Choose $A_n \in \bPi^0_{\nu_n}$ with $\nu_n < \xi$
such that $A =  \bigcup_{n \in \omega} A_n$.
By Borel-determinacy either $\bDelta^0_{\nu_n} \subseteq
\Delta_\F$ or $\Delta_\F \subseteq \bDelta^0_{\nu_n}$,
but the latter cannot hold by the minimality of $\xi$, hence
$\bDelta^0_{\nu_n} \subsetneq
\Delta_\mathcal{F}$ and therefore $\bPi^0_{\nu_n}
\subsetneq \Delta_\F$ for each $n$.
As $f$ is a $\Delta_\F$ function, $f^{-1} ( A_n )
\in \Delta_\F \subseteq \bDelta^0_\xi$,
hence $f^{-1} ( A ) \in \bSigma^0_\xi$ as required.

The case $\xi = 1$ is trivial, and it is left to the reader. 
\end{proof}

Notice that the same result is trivially true if we consider Borel functions instead of
$\bDelta^0_\xi$-functions.

\section{Borel-amenability}\label{sectionborelamenability}

A.\ Andretta and D.\ A.\ Martin
proposed in \cite{andrettamartin}  a notion of \emph{amenable} 
set of functions essentially
adjoining the following condition to 
Definition \ref{defsetofreductions}: \emph{for every countable
family $\{f_n \mid n \in \omega\} \subseteq \F$ we have that
$\overline{\bigoplus}_n f_n \in \F$, where \[ \overline{\bigoplus\nolimits}_n f_n(x) =
f_{x(0)}(x^-)\] and $x^- = \langle x(n+1) \mid n \in \omega
\rangle$.}
This condition can be recast in a different way.

\begin{proposition}\label{propequivconditions}
  The following are equivalent:
  \begin{enumerate}[i)]
    \item if $\{f_k \mid k \in \omega\}
    \subseteq \F$ then $\overline{\bigoplus}_k f_k \in \F$;
    \item if $\{f_k \mid k \in \omega\} \subseteq \F$ then
    $\bigcup_k (\restr{f_k}{\bN_{\langle k \rangle}}) \in \F$ and
    $\Lip \subseteq \F$.
  \end{enumerate}
\end{proposition}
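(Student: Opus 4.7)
The plan is to realize each of the two gluing operations in terms of the other by means of elementary shift maps, together with the inclusion $\Lip \subseteq \F$. For $(ii) \Rightarrow (i)$, the key observation is that the shift $s \colon \RR \to \RR$, $s(x) = x^-$, lies in $\Lip(2) \subseteq \Lip$ and hence in $\F$ by (ii). Given $\{f_k \mid k \in \omega\} \subseteq \F$, I would set $g_k = f_k \circ s \in \F$ (using closure under composition) and apply the gluing clause of (ii) to the family $\{g_k\}$. Unravelling the definitions yields
\[ \bigcup_k (\restr{g_k}{\bN_{\langle k \rangle}})(x) \;=\; g_{x(0)}(x) \;=\; f_{x(0)}(x^-) \;=\; \overline{\bigoplus\nolimits}_k f_k(x), \]
so $\overline{\bigoplus\nolimits}_k f_k \in \F$.

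For $(i) \Rightarrow (ii)$, I would first establish $\Lip \subseteq \F$. Since $\Lip(2^{-n}) \subseteq \L$ and $\L \subseteq \F$ by the standing convention on sets of reductions, it is enough to prove by induction on $n \geq 0$ that $\Lip(2^n) \subseteq \F$, the base case $n = 0$ being $\L$ itself. For the inductive step, given $f \in \Lip(2^n)$ with $n \geq 1$, I would set $f_k(y) = f(k \conc y)$; a routine calculation shows $f_k \in \Lip(2^{n-1})$, hence $f_k \in \F$ by the inductive hypothesis, and then (i) gives $f = \overline{\bigoplus\nolimits}_k f_k \in \F$. Once $\Lip \subseteq \F$ is in hand, the gluing clause of (ii) follows by a symmetric trick: given $\{f_k\} \subseteq \F$, the maps $h_k \colon y \mapsto k \conc y$ lie in $\L \subseteq \F$, so $g_k = f_k \circ h_k \in \F$, and (i) applied to $\{g_k\}$ gives
\[ \overline{\bigoplus\nolimits}_k g_k(x) \;=\; g_{x(0)}(x^-) \;=\; f_{x(0)}(x(0) \conc x^-) \;=\; f_{x(0)}(x), \]
which coincides with $\bigcup_k (\restr{f_k}{\bN_{\langle k \rangle}})(x)$.

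The only delicate point, and the step I expect to require the most care, is the inductive proof of $\Lip \subseteq \F$ in direction $(i) \Rightarrow (ii)$: one must verify that the block decomposition $f \mapsto (f_k)_k$ genuinely lowers the Lipschitz constant by a factor of $2$ at each step, so that the induction terminates at $\L$. The remaining verifications are routine unravellings of the definitions of $\overline{\bigoplus\nolimits}$ and of the gluing $\bigcup_k (\restr{f_k}{\bN_{\langle k \rangle}})$.
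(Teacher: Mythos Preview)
Your proof is correct and follows essentially the same approach as the paper: both directions are handled by pre-composing with the elementary shift maps $x \mapsto x^-$ and $x \mapsto k \conc x$, and $\Lip \subseteq \F$ is obtained by the same induction on the Lipschitz constant via the decomposition $f_k(y) = f(k \conc y)$. The only cosmetic difference is that in $(i) \Rightarrow (ii)$ the paper proves the gluing clause first (using only $\L \subseteq \F$, not the full $\Lip \subseteq \F$) and then the inclusion $\Lip \subseteq \F$, whereas you reverse the order; this is immaterial.
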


\begin{proof}
To see \textit{ii)} $\imp$ \textit{i)}, let
$\{f_k \mid  k \in \omega \} \subseteq \F$ and define $f^-_k(x) = (f_k(x))^-$ for every $x \in \RR$ and $k \in \omega$: then clearly $f^-_k \in \F$ (since $\Lip \subseteq \F$) and
$\overline{\bigoplus}_k f_k = \bigcup_k
(\restr{f^-_k}{\bN_{\langle k \rangle}}) \in \F$. 
To prove  \textit{i)} $\imp$ \textit{ii)}, let $\{f_k \mid k \in \omega \} \subseteq \F$ and 
for every $k \in
\omega$ put
\[ f^+_k\colon  \RR \to \RR \colon  x \mapsto f_k(k \conc x).\]
Since for every $k
\in \omega$ the function $x \mapsto k \conc x$ is in $\L \subseteq
\F$, we get $f^+_k \in \F$ and hence
$\overline{\bigoplus}_k f^+_k=\bigcup_k(\restr{f_k}{\bN_{\langle k \rangle}}) \in \F$. 
Let now $f \in \Lip$, and let $n
\in \omega$ be smallest 
 such that $f \in \Lip(2^n)$: we will prove by
induction on $n$ that $f \in \F$. If $n=0$ then $f \in \L \subseteq
\F$. Now assume that $\Lip(2^n) \subseteq \F$ and pick any $f
\in \Lip(2^{n+1})$. For every $k \in \omega$ define
$f_k \colon  \RR \to \RR\colon x \mapsto f(k \conc x)$.
Since for every $x,y \in \RR$
\begin{multline*}
d(f_k(x),f_k(y)) = d(f(k \conc x),f(k \conc y)) \leq \\ \leq 2^{n+1} d(k
\conc x,k \conc y)= 2^{n+1} \cdot \frac{1}{2} d(x,y) = 2^n d(x,y),
\end{multline*}
we have $\{f_k \mid k \in \omega\} \subseteq \Lip(2^n) \subseteq \F$,
and thus $f=\overline{\bigoplus}_k f_k \in \F$ by our hypotheses.
\end{proof}

Roughly speaking, this condition of amenability says that ($\Lip \subseteq
\F$ and) if we have a
``simple''
partition of $\RR$, i.e.\ composed by the simplest (in the sense
of $\leq_\L$) nontrivial sets (namely, sets
in $\Delta_\L$: every $\Delta_\L$-partition is always refined by $\langle \bN_{\langle
k \rangle} \mid  k \in \omega \rangle$, so  our
definition
based only on sets of the form $\bN_{\langle k \rangle}$  is not
restrictive), and we use on each piece of the
partition a function from $\F$ (as complex as we want), the
resulting function is already in $\F$. But the simplest
sets from ``the point of view'' of $\F$ are those in $\Delta_\F$,
hence it seems quite natural to extend the  definition of ``amenable'' to the
following:

\begin{defin} \label{defBorelamenable}
A set of reductions $\F\subseteq \Bor$  is
\emph{Borel-amenable} if:
\begin{enumerate}
\item $\Lip \subseteq \F$;

\item for every $\Delta_\F$-partition $\langle D_n \mid n \in \omega
  \rangle$ and every collection $\{ f_n\mid n \in \omega\}$ of
  functions from $\F$ we have that
\[ f = \bigcup_{n\in \omega} (\restr{f_n}{D_n}) \in \F.\]
\end{enumerate}

We will denote by $\sf{BAR}$ the set of all the Borel-amenable
sets of reductions.
\end{defin}

\begin{remark}
  The condition  $\F \subseteq \Bor$, as
  already observed, can be recast in an equivalent way
 by requiring that
  $\Delta_\F  \subseteq \bDelta^1_1$.
Note also that
  this condition  already implies that there is a surjection
    $j\colon \RR \onto \F$ (as observed in Remark \ref{rem3.2}), while the condition $\Lip \subseteq \F$ implies that
    $\L \subseteq \F$: hence if a \emph{set of functions} $\F\subseteq
    \Bor$  satisfies the
  two conditions in Definition \ref{defBorelamenable} and is closed under composition,
then it is automatically a Borel-amenable \emph{set of reductions}.
\end{remark}

Almost all the sets of functions one is willing to use as
reductions, such as continuous functions,
$\bDelta^0_\xi$-functions and
Borel functions, are examples of Borel-amenable sets of reductions.
In particular, by Proposition \ref{propequivconditions}, a
Borel-amenable set of reductions $\F$ is always closed under the operation
$\overline{\bigoplus}_n$, and since $\bDelta^0_1$
is the smallest $\L$-pointclass closed under $\bigoplus_n$, 
it is easy to check that $\bDelta^0_1 \subseteq \Delta_\F$ 
(hence, in particular, $\bN_s
\in  \Delta_\F$
for every $s \in {}^{<\omega}\omega$). Moreover,
if $\F \in \sf{BAR}$ then parts
\textit{ii)}-\textit{vi)} of Theorem \ref{theorgeneralproperties}
are true under $\SLO^\F$ alone (see Lemma 3 of \cite{andrettamartin} for
parts \textit{v)} and \textit{vi)}), and  if $\F,\G \in \sf{BAR}$ 
then the first part of \textit{vii)} is
provable also under $\SLO^\G$. 
Finally, we can also
establish a minimum among those Borel-amenable sets of reductions with
the same characteristic set. In fact, given a Borel-amenable set of reductions
$\F$, let $\F^\Lip$ be the collection of the functions locally in $\Lip$ on a
$\Delta_\F$-partition. Then $\F^\Lip \subseteq \F$ since $\F$ must satisfy condition
$2$ of the definition of Borel-amenability and $\Lip \subseteq
\F$.
This implies also $\Delta_{\F^\Lip} \subseteq \Delta_\F$.
Conversely, $\Delta_\F \subseteq \Delta_{\F^\Lip}$ since if $D \in
\Delta_\F$ the function $f_D = (\restr{g_0}{D}) \cup (\restr{g_1}{\neg D})$, where
$g_0,g_1 \in \L \subseteq \F$ are the constant functions with value, respectively,
$\vec{0}$ and $\vec{1}$, is in $\F^\Lip$ and reduces $D$ to $\bN_{\langle 0 \rangle}$. Thus
if $\G$ is such that $\Delta_\G = \Delta_\F$ then $\F^\Lip =
\G^\Lip \subseteq \G$. In particular, this implies that $\sf{D}_1^\Lip$
(the set of all the functions locally in $\Lip$ on a clopen partition) is a
subset of \emph{any} Borel-amenable set of reductions.
A similar argument allow us to prove  the following result.

\begin{proposition}\label{proponlyDxi}
  Let $\F$ be a Borel-amenable set of reductions. Then either
  $\Delta_\F = \bDelta^1_1$ or there is some nonzero $\xi< \omega_1$
  such that $\Delta_\F=\bDelta^0_\xi$.
\end{proposition}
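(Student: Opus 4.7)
The plan is to analyze $\Delta_\F$ via Wadge-comparability with each Borel level and then apply Borel-amenability to fill in equality. Since $\F \supseteq \Lip$ contains all the simple piecewise-Lipschitz maps on clopen partitions (which exhaust the continuous functions on $\RR$), $\Delta_\F$ is a selfdual boldface pointclass; each $\bDelta^0_\xi$ is likewise selfdual, so Wadge's Lemma (available under our working assumptions, exactly as used in Proposition~\ref{propDeltaxilevels}) gives the dichotomy: for each nonzero $\xi < \omega_1$, either $\bDelta^0_\xi \subseteq \Delta_\F$ or $\Delta_\F \subseteq \bDelta^0_\xi$. If the second alternative never holds, then $\Delta_\F \supseteq \bigcup_{\xi < \omega_1}\bDelta^0_\xi = \bDelta^1_1$, and combined with $\Delta_\F \subseteq \bDelta^1_1$ (from $\F \subseteq \Bor$) we conclude $\Delta_\F = \bDelta^1_1$.

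Otherwise let $\xi < \omega_1$ be least with $\Delta_\F \subseteq \bDelta^0_\xi$; the remaining task is $\bDelta^0_\xi \subseteq \Delta_\F$. First I would upgrade the information below $\xi$: for each $\eta<\xi$, minimality rules out $\Delta_\F \subseteq \bDelta^0_\eta$, so the dichotomy gives $\bDelta^0_\eta \subseteq \Delta_\F$; applying the same dichotomy to $\bSigma^0_\eta$ versus $\Delta_\F$ yields $\bSigma^0_\eta \subseteq \Delta_\F$, because the alternative $\Delta_\F \subseteq \bSigma^0_\eta$ would force $\Delta_\F \subseteq \bPi^0_\eta$ by selfduality and hence $\Delta_\F \subseteq \bDelta^0_\eta$, contradicting minimality. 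Thus $\bSigma^0_\eta \cup \bPi^0_\eta \subseteq \Delta_\F$ for all $\eta < \xi$. Next I would verify that $\Delta_\F$ is closed under finite Boolean operations: given $A,B \in \Delta_\F$ with $f_B \in \F$ reducing $B$ to $\bN_{\langle 0\rangle}$, gluing $f_B \restriction A$ with the constant $\vec 1$ on $\neg A$ along the $\Delta_\F$-partition $\{A,\neg A\}$ produces, by Borel-amenability, a function in $\F$ reducing $A \cap B$ to $\bN_{\langle 0\rangle}$.

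Finally, to show $\bDelta^0_\xi \subseteq \Delta_\F$, I would fix $A \in \bDelta^0_\xi$ and write $A = \bigcup_n A_n$, $\neg A = \bigcup_n C_n$ with $A_n, C_n \in \bigcup_{\eta<\xi} \bPi^0_\eta \subseteq \Delta_\F$ (in the successor case $\xi = \eta+1$ all pieces sit in $\bPi^0_\eta$; in the limit case each piece sits at some level $<\xi$). Interleaving and taking successive differences, set $D_{2k} = A_k \setminus \bigcup_{j<2k} D_j$ and $D_{2k+1} = C_k \setminus \bigcup_{j<2k+1} D_j$: Boolean closure gives $D_k \in \Delta_\F$, a straightforward induction shows $\bigcup_k D_k = \RR$, and by construction each $D_k$ is contained in $A$ (even indices) or in $\neg A$ (odd indices). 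Gluing the constant functions $\vec 0$ (on blocks inside $A$) and $\vec 1$ (on blocks inside $\neg A$) along this $\Delta_\F$-partition then yields, via Borel-amenability, a function in $\F$ reducing $A$ to $\bN_{\langle 0\rangle}$, whence $A \in \Delta_\F$. The main obstacle is precisely this last step: extracting from the two $\bSigma^0_\xi$-decompositions a $\Delta_\F$-partition all of whose blocks already lie strictly below level $\xi$ — which is where the strengthening $\bSigma^0_\eta \cup \bPi^0_\eta \subseteq \Delta_\F$ (rather than merely $\bDelta^0_\eta \subseteq \Delta_\F$) for $\eta<\xi$ is doing the real work.
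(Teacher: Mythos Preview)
Your argument is correct and follows the same overall strategy as the paper: pick the least $\xi$ with $\Delta_\F \subseteq \bDelta^0_\xi$, show $\bPi^0_\eta \subseteq \Delta_\F$ for all $\eta<\xi$, decompose an arbitrary $D \in \bDelta^0_\xi$ into a $\Delta_\F$-partition separating $D$ from $\neg D$, and then glue the constants $\vec{0}$ and $\vec{1}$ via Borel-amenability. The one substantive difference is in how the partition is obtained. The paper invokes an external result (Theorem~4.2 of \cite{mottorosnewcharacterization}) asserting that every $D \in \bDelta^0_\xi$ admits a partition $\langle D_n \mid n\in\omega\rangle$ of $\RR$ with each $D_n \in \bPi^0_{\mu_n}$ for some $\mu_n<\xi$ and $D = \bigcup_{i\in I} D_i$; this immediately gives a $\Delta_\F$-partition once one knows $\bPi^0_{<\xi}\subseteq\Delta_\F$. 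You instead build the partition by hand: write $A=\bigcup_n A_n$ and $\neg A=\bigcup_n C_n$ with $A_n,C_n\in\bPi^0_{<\xi}$, interleave, and take successive differences, using the finite Boolean closure of $\Delta_\F$ (which you verify directly) to keep each block in $\Delta_\F$. Your route is slightly longer but more self-contained, avoiding the external citation; the paper's route is shorter but imports a nontrivial structural fact about $\bDelta^0_\xi$ sets.
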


\begin{proof}
  Assume that $\Delta_\F \subsetneq \bDelta^1_1$ and let $\xi <
  \omega_1$ be the smallest nonzero ordinal such that $\Delta_\F
  \subseteq \bDelta^0_\xi$. If $D \in \bDelta^0_\xi$, then there is
  some partition $\langle D_n \mid n \in \omega \rangle$ of $\RR$ such that $D =
  \bigcup_{i \in I} D_i$ for some $I \subseteq \omega$ and $D_n \in
  \bPi^0_{\mu_n}$ for some $\mu_n < \xi$ (see Theorem 4.2 in 
\cite{mottorosnewcharacterization}). Since $\bDelta^0_\mu
  \subsetneq \Delta_\F$ for every $\mu < \xi$ (by minimality of
  $\xi$), we have $\{D_n \mid n\in \omega\} \subseteq \bigcup_{\mu<\xi} \bPi^0_\mu
  \subseteq \Delta_\F$. Let $g_0,g_1$ be defined as above, 
  and put
  $f_i = g_0$ if $i \in I$ and $f_i=g_1$ otherwise. By
  Borel-amenability,
$f= \bigcup_{n \in \omega}(\restr{f_n}{D_n}) \in \F$ and $f^{-1}(\bN_{\langle 0 \rangle}) = D$, 
i.e.\ $D \in \Delta_\F$: therefore $\bDelta^0_\xi \subseteq \Delta_\F$.
\end{proof}

Notice that this Proposition easily implies Proposition
\ref{propDeltaxilevels} (in the special case $\F \in \sf{BAR}$) 
and that $\Delta_\F$ is always an algebra of sets, i.e.\
that it is closed under complements, finite intersections and finite
unions (this fact directly follows also from Lemma
\ref{lemmaintersection}). Moreover, as a corollary one gets that either ${\rm
  Sat}(\F) = \Bor$ or ${\rm Sat}(\F) = \sf{D}_\xi$ for some nonzero
$\xi<\omega_1$.
\\

In the following couple of Lemmas we will always assume
$\F \in \sf{BAR}$. They are analogous of Lemma 12 and, respectively, Lemma 13 and Proposition 18 of \cite{andrettamartin}, and can be proved in almost the same way (here it is enough to use the second condition of Definition \ref{defBorelamenable}).

\begin{lemma} \label{lemmaintersection}
Let $D \subseteq D'$ be two sets in $\Delta_\F$. For every $A
\subseteq \RR$, if $A \cap D' \neq \RR$ then $A \cap D \leq_\F A \cap
D'$. In particular, if $D \in \Delta_\F$ and $A \neq \RR$ then $A \cap
D \leq_\F A$.
\end{lemma}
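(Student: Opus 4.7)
The plan is to construct explicitly a function in $\F$ that reduces $A\cap D$ to $A\cap D'$. The natural candidate is the one that acts as the identity on $D$ (which immediately sends $A\cap D$ into $A\cap D' \supseteq A\cap D$) and sends everything outside $D$ to a fixed point witnessing $A\cap D'\neq\RR$. To make this function $\F$-measurable, I will invoke condition (2) of Borel-amenability applied to the two-piece partition $\{D,\neg D\}$.

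More precisely, first I would fix some $x_0\in\RR\setminus(A\cap D')$, which exists by the hypothesis $A\cap D'\neq\RR$, and let $c_{x_0}\in\L\subseteq\F$ denote the constant function with value $x_0$. Next I would check that both $D$ and $\neg D$ lie in $\Delta_\F$: since $\bDelta^0_1\subseteq\Delta_\F$, the clopen partition $\{\bN_{\langle 0\rangle},\neg \bN_{\langle 0\rangle}\}$ is a $\Delta_\F$-partition, so Borel-amenability applied to the two constant functions with values $\vec{0}$ and $\vec{1}$ yields a $\tau\in\F$ with $\tau^{-1}(\bN_{\langle 0\rangle})=\neg\bN_{\langle 0\rangle}$; composing $\tau$ with a function in $\F$ witnessing $D\in\Delta_\F$ shows $\neg D\in\Delta_\F$. (This verification also appears implicitly in the discussion of $\F^\Lip$ just before the lemma.)

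Thus $\{D,\neg D\}$ is a bona fide $\Delta_\F$-partition, and by condition (2) of Definition \ref{defBorelamenable} applied to this partition together with the functions $\id,c_{x_0}\in\F$, the function
\[ f = (\restr{\id}{D})\cup(\restr{c_{x_0}}{\neg D}) \]
belongs to $\F$. The verification that $f^{-1}(A\cap D')=A\cap D$ is then a simple case split: if $x\in D$ then $f(x)=x$ lies in $A\cap D'$ iff $x\in A$ (using $D\subseteq D'$), which is iff $x\in A\cap D$; if $x\notin D$ then $f(x)=x_0\notin A\cap D'$, while also $x\notin A\cap D$, so both sides agree. This yields $A\cap D\leq_\F A\cap D'$.

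Finally, the parenthetical ``in particular'' statement follows by taking $D'=\RR$, which lies in $\bDelta^0_1\subseteq\Delta_\F$ and satisfies $A\cap D'=A\neq\RR$ by hypothesis. There is no real obstacle here — the only subtlety worth flagging is the small verification that $\Delta_\F$ is closed under complements, which is needed to legitimize $\{D,\neg D\}$ as a $\Delta_\F$-partition and which falls out immediately from Borel-amenability together with $\bDelta^0_1\subseteq\Delta_\F$.
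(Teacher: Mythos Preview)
Your proof is correct and follows essentially the same approach the paper has in mind: the paper does not spell out the argument but refers to Lemma~12 of \cite{andrettamartin} and notes that one should use the second condition of Definition~\ref{defBorelamenable}, which is precisely your construction $f=(\restr{\id}{D})\cup(\restr{c_{x_0}}{\neg D})$. The only minor simplification available is that the selfduality of $\Delta_\F$ (hence $\neg D\in\Delta_\F$) was already noted in the paper immediately after the definition of $\Delta_\F$ as a consequence of $\L\subseteq\F$ alone, so your separate verification via Borel-amenability, while correct, is not strictly needed.
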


\begin{lemma} \label{lemmapartition}
Let $\langle D_n \mid  n \in \omega \rangle$ be a
$\Delta_\F$-partition
of $\RR$ and let $A \neq \RR$.
\begin{enumerate}[a)]
\item If $C \subseteq \RR$ and $A \cap D_n \leq_\F C$ for every $n \in
  \omega$ then $A \leq_\F C$.
\item Assume $\SLO^\F$. If $\forall n \in \omega (A \cap D_n <_\F A)$ then $A
  \leq_\F \neg
A$. Moreover, if $D_n = \emptyset$ for all but finitely many $n$'s then $A$ is not limit.

\item Assume $\SLO^\F$. If $A \leq_\F \neg A$ and $[A]_\F$ is
  immediately above a
nonselfdual pair $\{[C]_\F,[\neg C]_\F\}$ with $C \neq \emptyset,
\RR$, then there is 
$D \in \Delta_\F$ such that $A \cap D, A \cap \neg D
<_\F A$.
\end{enumerate}
\end{lemma}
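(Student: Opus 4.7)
The plan is to handle (a)--(c) in turn, with condition (2) of Borel-amenability---pasting functions on a $\Delta_\F$-partition---as the workhorse throughout. For part (a), pick for each $n$ some $f_n \in \F$ with $A \cap D_n = f_n^{-1}(C)$ and paste to obtain $f = \bigcup_n (\restr{f_n}{D_n}) \in \F$. Because the $D_n$ are pairwise disjoint, for $x \in D_n$ one has $f(x) = f_n(x)$, hence $f^{-1}(C) \cap D_n = f_n^{-1}(C) \cap D_n = A \cap D_n$; summing over $n$ yields $f^{-1}(C) = A$, i.e.\ $A \leq_\F C$.

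For part (b), apply $\SLO^\F$ to the pair $(A, A \cap D_n)$: since $A \nleq_\F A \cap D_n$ one has $\neg (A \cap D_n) \leq_\F A$, and dualizing the reducing function gives $A \cap D_n \leq_\F \neg A$ for every $n$. An application of (a) with $C := \neg A$ then yields $A \leq_\F \neg A$. For the \emph{moreover} clause, suppose towards a contradiction that $A$ is limit while $D_n = \emptyset$ for all $n > N$. Using the trivial Lipschitz reductions $A \cap D_n \leq_\L n \conc (A \cap D_n)$, apply (a) with $C := \bigoplus_n (A \cap D_n)$ to obtain $A \leq_\F C$. Since only the first $N+1$ summands may be nonempty, $C$ is in effect a finite direct sum; by $\SLO^\F$ there is a $\leq_\F$-maximum $M$ among $\{A \cap D_n \mid n \leq N\}$, and a second application of Borel-amenability on $\langle \bN_{\langle n \rangle} \mid n \in \omega \rangle$---sending each slice with $n \leq N$ through a reduction $A \cap D_n \leq_\F M \oplus \neg M$ and each slice with $n > N$ to a fixed point outside $M \oplus \neg M$---witnesses $C \leq_\F M \oplus \neg M$. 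Since $[M]_\F < [A]_\F$ and $A$ is limit, Theorem \ref{theorgeneralproperties}(v) implies that the immediate successor $[M \oplus \neg M]_\F$ of $[M]_\F$ still lies strictly below $[A]_\F$; hence $[C]_\F < [A]_\F$, contradicting $A \leq_\F C$.

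For part (c), Theorem \ref{theorgeneralproperties}(v) together with the assumption that $[A]_\F$ is the immediate successor of $\{[C]_\F, [\neg C]_\F\}$ gives $A \equiv_\F C \oplus \neg C$. Fix $f \in \F$ with $A = f^{-1}(C \oplus \neg C)$ and set $E := \{x \in \RR \mid x(0) \text{ is even}\} \in \bDelta^0_1 \subseteq \Delta_\F$, so that $D := f^{-1}(E) \in \Delta_\F$ (every $f \in \F$ is a $\Delta_\F$-function). A piecewise construction on the clopen partition $\langle \bN_{\langle k \rangle} \mid k \in \omega \rangle$---applying the shift $x \mapsto x^-$ on even first-digit slices and a fixed constant in $\neg C$ (available since $C \neq \RR$) on odd first-digit slices---produces a map in $\F^\Lip \subseteq \F$ reducing $(C \oplus \neg C) \cap E$ to $C$; a symmetric construction yields $(C \oplus \neg C) \cap \neg E \leq_\F \neg C$. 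Composing with $f$ gives $A \cap D \leq_\F C <_\F A$ and $A \cap \neg D \leq_\F \neg C <_\F A$, as required.

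The main obstacle will be the bookkeeping in the \emph{moreover} clause of (b): verifying rigorously that a finite direct sum of $\F$-degrees strictly below a limit degree is itself strictly below it, which requires separately analyzing the selfdual and nonselfdual cases for the chosen maximum $[M]_\F$ and invoking the successor-of-pair description from Theorem \ref{theorgeneralproperties}(v).
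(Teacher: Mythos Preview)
Your proposal is correct and follows precisely the route the paper intends: the paper does not spell out a proof but states that Lemma~\ref{lemmapartition} is the analogue of Lemma~13 and Proposition~18 of \cite{andrettamartin} and ``can be proved in almost the same way (here it is enough to use the second condition of Definition~\ref{defBorelamenable})''---which is exactly the pasting-on-a-$\Delta_\F$-partition device you employ throughout.

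One small point of precision in the \emph{moreover} clause of (b): the phrase ``by $\SLO^\F$ there is a $\leq_\F$-maximum $M$'' is not literally true, since two of the $A \cap D_n$ could be $\leq_\F$-incomparable and form a nonselfdual pair. What $\SLO^\F$ does guarantee is that among the finitely many pairs $\{[A\cap D_n]_\F,[\neg(A\cap D_n)]_\F\}$ there is a $\leq$-largest one; choosing $M$ from that pair then gives $A\cap D_n \leq_\F M$ or $A \cap D_n \leq_\F \neg M$ for every $n\leq N$, hence $A\cap D_n \leq_\F M\oplus\neg M$, and the rest of your argument goes through unchanged. You already anticipate this in your final paragraph by noting the need to split into the selfdual and nonselfdual cases for $M$ (in the selfdual case $M\oplus\neg M \equiv_\F M <_\F A$ directly; in the nonselfdual case $[M\oplus\neg M]_\F$ is the immediate successor of $[M]_\F$ by Theorem~\ref{theorgeneralproperties}(v), and comparing with the limit degree $[A]_\F$ via $\SLO^\F$ yields $[M\oplus\neg M]_\F < [A]_\F$). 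With that wording adjusted, the argument is complete.
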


\begin{defin}
Let $\F \in \sf{BAR}$ and $A \subseteq \RR$. 
We say that $A$ has the \emph{decomposition property with respect to
$\F$} if there is a
$\Delta_\F$-partition $\langle D_n \mid n \in \omega \rangle$ of $\RR$ such
that $D_n \cap A <_\F A$ for every $n$.

Moreover, we will say that $\F$ has the \emph{decomposition property}
(\textbf{DP} for short) if
every $\F$-selfdual set $A \notin \Delta_\F$ has the decomposition
property with respect to $\F$.
\end{defin}

Note that the property \textbf{DP} is essentially a converse to part \textit{b)}
of Lemma \ref{lemmapartition}. The following Theorem is analogous to Corollary 17 of \cite{andrettamartin}.

\begin{theorem}\label{theorstructure}
Let $\F$ be a Borel-amenable set of reductions with the {\rm
\textbf{DP}}. Then
\begin{enumerate}[i)]
\item if $[A]_\F$ is limit of uncountable cofinality with respect
to $\leq_\F$ then
  $A \nleq_\F \neg A$;
\item assume $\SLO^\F$: then after a selfdual
  $\F$-degree there is an $\F$-nonselfdual pair.
\end{enumerate}
\end{theorem}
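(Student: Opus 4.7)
The plan is to extract both statements from a simple combination of the decomposition property and Lemma~\ref{lemmapartition}\textit{a)}. A key preliminary observation is that inside the characteristic set the $\F$-structure is essentially trivial: for any $C \in \Delta_\F$ with $C \neq \emptyset,\RR$ one has $[C]_\F = [\bN_{\langle 0\rangle}]_\F$. Indeed, the two-valued map sending $\bN_{\langle 0\rangle}$ to a chosen point of $C$ and $\neg\bN_{\langle 0\rangle}$ to a chosen point of $\neg C$ is locally constant on the $\Delta_\F$-partition $\{\bN_{\langle 0\rangle},\neg\bN_{\langle 0\rangle}\}$, hence lies in $\F$ by Borel-amenability and witnesses $\bN_{\langle 0\rangle}\leq_\F C$; the reverse inequality is the definition of $\Delta_\F$. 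Consequently $[\bN_{\langle 0\rangle}]_\F$ is the immediate successor of the nonselfdual pair $\{[\emptyset]_\F,[\RR]_\F\}$ and is not limit.

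For \textit{i)} I would fix an $\F$-selfdual $A$ (so $A \neq \emptyset,\RR$ by Theorem~\ref{theorgeneralproperties}\textit{iv)}) and split on whether $A \in \Delta_\F$. If $A \in \Delta_\F$ the preliminary observation places $[A]_\F$ at $[\bN_{\langle 0\rangle}]_\F$, a successor degree. Otherwise \textbf{DP} supplies a $\Delta_\F$-partition $\langle D_n \mid n \in \omega\rangle$ with $A \cap D_n <_\F A$ for every $n$, and Lemma~\ref{lemmapartition}\textit{a)} (whose hypothesis $A \neq \RR$ is met) shows that any upper bound of the $[A \cap D_n]_\F$'s already dominates $[A]_\F$. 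The countable chain of degrees $[A \cap D_n]_\F$ is therefore cofinal below $[A]_\F$, forcing $[A]_\F$ to have countable cofinality and so not be limit of uncountable cofinality.

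For \textit{ii)} I would assume $\SLO^\F$ and argue by contradiction: suppose a selfdual $[A]_\F$ admits a selfdual immediate successor $[B]_\F$. If $B \in \Delta_\F$, the preliminary observation forces $[A]_\F = [B]_\F = [\bN_{\langle 0\rangle}]_\F$, contradicting $A <_\F B$. Otherwise \textbf{DP} applied to $B$ furnishes a $\Delta_\F$-partition $\langle D_n\rangle$ with $B \cap D_n <_\F B$. Using $\SLO^\F$ together with $A \equiv_\F \neg A$, for each $n$ either $B \cap D_n \leq_\F A$, or $A <_\F B \cap D_n <_\F B$, and in the latter case the fact that $[B]_\F$ is the successor of $[A]_\F$ collapses $[B \cap D_n]_\F$ back to $[A]_\F$, so $B \cap D_n \leq_\F A$ anyway. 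A final appeal to Lemma~\ref{lemmapartition}\textit{a)} yields $B \leq_\F A$, contradicting $A <_\F B$. The only delicate point throughout is the treatment of sets in $\Delta_\F$, handled by the preliminary observation; the remainder closely parallels Corollary~17 of \cite{andrettamartin}.
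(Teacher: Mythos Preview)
Your argument is correct and follows essentially the same route as the paper's: both parts combine \textbf{DP} with Lemma~\ref{lemmapartition}\textit{a)}, and your part \textit{ii)} is simply the contrapositive of the paper's version (the paper directly exhibits an index $n_0$ with $A <_\F D_{n_0} \cap B <_\F B$, whereas you rule this out via the successor hypothesis and conclude $B \leq_\F A$). One small imprecision: in the case $B \in \Delta_\F$ of part \textit{ii)}, what your preliminary observation actually gives is $B \equiv_\F \bN_{\langle 0\rangle} \leq_\F A$ (since $A$ selfdual forces $A \neq \emptyset,\RR$), not $[A]_\F = [B]_\F$; the contradiction with $A <_\F B$ still follows immediately.
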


\begin{proof}
  \begin{enumerate}[i)]
  \item Suppose that $A$ is $\F$-limit of uncountable cofinality 
(hence, in particular, $A \notin
\Delta_\F$) and assume towards a contradiction that $A \leq_\F \neg A$. Let $\seq{D_n}{n\in \omega}$ be a
$\Delta_\F$-partition
 of $\RR$ such that $D_n \cap A <_\F A$ for each $n$ (which exists
 since  $\F$ has
the \textbf{DP}). If $B \subseteq \RR$ is such that $A \cap D_n
\leq_\F B$ for every $n$ then $A \leq_\F B$ by Lemma
\ref{lemmapartition}:
hence $A$ is the supremum of the family $\mathcal{A} = \{ A \cap D_n \mid
n\in\omega\}$ and therefore is of countable cofinality, a contradiction!

\item It is enough to prove that if $A$ and $B$ are $\F$-selfdual and
  $A <_\F  B$ (which in particular implies $B \notin \Delta_\F$) then
$B$ is not the successor
of  $A$. Using \textbf{DP},
let $\seq{D_n}{n \in \omega}$ be a $\Delta_\F$-partition such that $D_n \cap B
<_\F B$  for every $n$.
If $D_n \cap B \leq_\F A$ for each $n\in \omega$, then $B \leq_\F A$ by Lemma
\ref{lemmapartition}, a contradiction! Thus there
is some
$n_0\in\omega$ such that $D_{n_0} \cap B \nleq_\F A$: hence $\SLO^\F$ imples
$\neg A  \leq_\F
D_{n_0} \cap B$, and since $A \leq_\F \neg A$ we get $A<_\F D_{n_0}
\cap B  <_\F B$. \qedhere
  \end{enumerate}
\end{proof}

This proves that we can answer Question 1 and Question 2 for every $\F \in \sf{BAR}$ which 
satisfies \textbf{DP}. We will see in the next Section
that, fortunately, \emph{every} Borel-amenable set of reductions has this 
property, thus under $\SLOL+\NFS+\DCR$, we can 
completely determine the degree-structure induced by any reasonable
(i.e.\ Borel-amenable) set of reductions.
However we first want to go further and show that if $\F$ is as in the
previous Proposition then the structure of the $\F$-degrees is
completely determined by the set $\Delta_\F$. This is the reason for which
the set $\Delta_\F$ has been called ``characteristic set''.

\begin{theorem}[\SLOL]\label{theorequivalentreductions}
Let $\F, \F' \in \sf{BAR}$ be such that
$\Delta_\F \subseteq \Delta_{\F'}$ and suppose that $\F$ has the {\rm
\textbf{DP}}. Then for every $A,B \subseteq \RR$
\[A \leq_\F B \imp A \leq_{\F'} B.\]
In particular, if $\F \in \sf{BAR}$ has the
{\rm \textbf{DP}} and $\F' \subseteq \F$ is a Borel-amenable
set of reductions such that $\Delta_\F =
\Delta_{\F'}$, then for every $A,B \subseteq \RR$
\[ A \leq_\F B \iff A \leq_{\F'} B.\]
\end{theorem}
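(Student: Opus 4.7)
The plan is to prove the first implication by transfinite induction on the $\F$-rank of $A$, which is a well-defined ordinal since $\leq_\F$ is well-founded by part \textit{i)} of Theorem \ref{theorgeneralproperties}. The second statement is then immediate: $\F' \subseteq \F$ trivially yields $A \leq_{\F'} B \Rightarrow A \leq_\F B$, and the reverse implication is just the first part applied under $\Delta_\F = \Delta_{\F'}$.

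The base case $A \in \{\emptyset, \RR\}$ is handled by a suitable constant function in $\L \subseteq \F'$. For the inductive step I assume the claim for all $A'$ with strictly smaller $\F$-rank than $A$, and split on the nature of $[A]_\F$. If $A$ is $\F$-nonselfdual, part \textit{vii)} of Theorem \ref{theorgeneralproperties} gives $[A]_\F = [A]_\L$; combined with Lemma \ref{lemmabasic} (applied to $\L \subseteq \F$ under $\SLOL$) this converts the hypothesis $A \leq_\F B$ into $A \leq_\L B$ --- the case $A <_\F B$ uses the conclusion of Lemma \ref{lemmabasic} directly, while $A \equiv_\F B$ uses $B \in [A]_\F = [A]_\L$ --- and $\L \subseteq \F'$ then yields $A \leq_{\F'} B$.

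If $A$ is $\F$-selfdual with $A \in \Delta_\F$, then $A \in \Delta_{\F'}$, and since selfdual sets are never $\emptyset$ or $\RR$, both $A$ and $B$ are nontrivial. The $\F'$-reduction is built explicitly: pick $y_0 \in B$ and $y_1 \in \neg B$, let $g_0, g_1 \in \L \subseteq \F'$ be the corresponding constant functions, and apply Borel-amenability of $\F'$ to the $\Delta_{\F'}$-partition $\seq{D_n}{n \in \omega}$ with $D_0 = A$, $D_1 = \neg A$, and $D_n = \emptyset$ otherwise, to obtain $f = (\restr{g_0}{A}) \cup (\restr{g_1}{\neg A}) \in \F'$ with $f^{-1}(B) = A$.

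The main obstacle is the remaining case, when $A$ is $\F$-selfdual and $A \notin \Delta_\F$. Here I will exploit the \textbf{DP} hypothesis on $\F$ to produce a $\Delta_\F$-partition $\seq{D_n}{n \in \omega}$ of $\RR$ with $A \cap D_n <_\F A$ for every $n$. Transitivity gives $A \cap D_n \leq_\F B$, so the inductive hypothesis (combined with $\ACOR$ to make the countable choice) produces $f_n \in \F'$ satisfying $f_n^{-1}(B) = A \cap D_n$. Since $\Delta_\F \subseteq \Delta_{\F'}$, the same sequence is also a $\Delta_{\F'}$-partition, so Borel-amenability of $\F'$ glues the $f_n$'s into a single $f = \bigcup_n (\restr{f_n}{D_n}) \in \F'$, and $f^{-1}(B) = A$ follows by a direct computation.
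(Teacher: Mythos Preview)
Your proof is correct and follows the same case structure as the paper's: the nonselfdual case via $[A]_\F = [A]_\L$ (Theorem~\ref{theorgeneralproperties}~\textit{vii)}), the case $A \in \Delta_\F$ by an explicit two-valued reduction, and the selfdual case $A \notin \Delta_\F$ via \textbf{DP} together with the Borel-amenability of $\F'$.

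The one substantive difference is how you obtain $A \cap D_n \leq_{\F'} B$ in the last case. You use the inductive hypothesis; the paper instead first disposes of the case $A <_\F B$ (directly, via Lemma~\ref{lemmabasic}), so that here $A \equiv_\F B$, and then from $D_n \cap A <_\F A \equiv_\F B$ concludes $D_n \cap A <_\L B$ by Lemma~\ref{lemmabasic} under $\SLOL$, hence $D_n \cap A \leq_{\F'} B$ since $\L \subseteq \F'$; Lemma~\ref{lemmapartition}~\textit{a)} (applied to $\F'$) then gives $A \leq_{\F'} B$. This avoids any transfinite induction, and in particular avoids appealing to the well-foundedness of $\leq_\F$. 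Since Theorem~\ref{theorgeneralproperties}~\textit{i)} is stated under $\SLOL + \NFS + \DCR$, your argument as written needs these extra hypotheses beyond the $\SLOL$ tagged on the theorem, whereas the paper's direct route matches the stated axiomatic strength.
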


\begin{proof}
We must take cases.
If $A<_\F
B$ then $A <_\L B$ by Lemma \ref{lemmabasic} and hence, in particular,
$A \leq_{\F'} B$: thus we can assume $A \equiv_\F B$ for the other cases. If $A \nleq_\F
\neg A$ then $[A]_\F =[A]_\L$ by part \textit{vii)} of
Theorem \ref{theorgeneralproperties} and, since $B \in [A]_\F$ by our
assumption, we have also $A \equiv_\L B$: thus $A
\leq_{\F'} B$. If $A \in \Delta_\F$ then also $B \in \Delta_\F$,
and hence $A \leq_{\F'} B$ since $\Delta_\F \subseteq \Delta_{\F'}$.
Therefore it remains only to consider the case $A
\equiv_\F \neg A \equiv_\F B \notin \Delta_\F$. 
Since $\F$ has the decomposition property,
there is some $\Delta_\F$-partition $\langle D_n \mid n\in \omega \rangle$ such
that $D_n \cap A <_\F A \equiv_\F B$ for every $n$. In particular,
using Lemma \ref{lemmabasic} and $\SLOL$, we have that $D_n \cap A
<_\L B$ and
hence, since $\F'$ is Borel-amenable and $\Delta_\F \subseteq \Delta_{\F'}$,
$A \leq_{\F'} B$ by Lemma \ref{lemmapartition}.
\end{proof}

Note that under the assumption $\F' \subseteq \F$ we can reprove the same
result assuming only $\SLO^{\F'}$
instead of $\SLOL$. The previous Theorem
allow also to compare different sets of reductions
in term of the hierarchy of degrees induced by them: let us
say that two sets of reductions $\F$ and $\G$ are
\emph{equivalent} ($\F \simeq \G$ in symbols) if they induce the same
hierarchy of degrees, that is if for every
$A,B \subseteq \RR$ we have $A \leq_\F B$ if and only if $A \leq_\G B$.
Then Theorem \ref{theorequivalentreductions} implies that if
$\F$ and $\G$ are two Borel-amenable sets of reductions (with the
\textbf{DP})  we have
\[ \F \simeq \G \iff \Delta_\F = \Delta_\G.\]

\section{The Decomposition Property}\label{sectionDP}

In this Section we will prove that every
Borel-amenable set of reductions $\F$ has the decomposition
property, but we first need the following two Lemmas, which are refinements of
Theorem 13.1 and Theorem 13.11 in \cite{kechris}. Lemma
\ref{lemmarelativization} 
is a simple variation (for the Baire space endowed with the usual topology) 
of  Theorem 22.18
of \cite{kechris} (stated there, although in a slightly different form, as 
Exercise 22.20), while Lemma
 \ref{lemmafunction}  follows from Lemma \ref{lemmarelativization} by
 standard arguments. However we want to point out that these results
 could be  generalized (with 
slightly different proofs) by considering \emph{any} $\L$-pointclass
$\Delta \subseteq \bDelta^1_1(\RR,\tau)$ --- see \cite{mottorosthesis}.
This observation allows also to generalize Theorem \ref{theorDP} to
almost all the sets of reductions $\F \subseteq \Bor$ (not only to the
Borel-amenable ones), but we will not
use this fact here.

\begin{lemma}\label{lemmarelativization}
Let $d$ be the usual metric on $\RR$, $\tau$ the topology induced by $d$, 
and $\xi$ be any nonzero
countable ordinal. Let $\Delta$ be either $\bDelta^0_\xi(\RR,\tau)$ or
$\bDelta^1_1(\RR,\tau)$. For any family
$\{D_n \mid  n\in \omega\} \subseteq \Delta$ there is a metric $d'$ 
on $\RR$ such that
\begin{enumerate}[i)]
\item $(\RR,\tau')$ is Polish and zero-dimensional, where $\tau'$ is the topology
induced by $d'$;

\item $\tau'$  refines  $\tau$;

\item each $D_n$ is $\tau'$-clopen;

\item there is a countable clopen basis $\B'$ for $\tau'$ such
that $\B'
  \subseteq \Delta$.
\end{enumerate}
\end{lemma}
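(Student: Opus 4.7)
The plan is to prove the $\bDelta^0_\xi$ case by transfinite induction on $\xi$ and then deduce the $\bDelta^1_1$ case. The base case $\xi = 1$ is immediate for the Baire space: every $\bDelta^0_1$ set is already $\tau$-clopen, and the canonical basis $\{\bN_s \mid s \in \seqo\}$ is a countable clopen basis contained in $\bDelta^0_1 = \Delta$, so one may take $d' = d$ and $\B' = \{\bN_s \mid s \in \seqo\}$.

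For the inductive step, I would use the standard decomposition of $\bDelta^0_\xi$ sets: each $D_n$ can be written as $D_n = \bigcup_k A_{n,k}$ with $A_{n,k} \in \bPi^0_\eta$ for some $\eta < \xi$, and likewise $\RR \setminus D_n = \bigcup_k B_{n,k}$ with each $B_{n,k} \in \bPi^0_\eta$ for some $\eta < \xi$. Enumerate all these pieces as a countable collection $\{E_m \mid m \in \omega\}$ and apply the inductive hypothesis (together with a sub-induction on the rank in the successor case) to obtain a topology $\tau^*$ refining $\tau$, Polish and zero-dimensional, with a countable clopen basis $\B^* \subseteq \bigcup_{\eta < \xi} \bDelta^0_\eta \subseteq \bDelta^0_\xi$, in which every $E_m$ is clopen. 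Then each $D_n$ is a countable union of $\tau^*$-clopen sets, hence $\tau^*$-open, and symmetrically $\RR \setminus D_n$ is $\tau^*$-open, so $D_n$ is $\tau^*$-clopen. Set $\tau' = \tau^*$ and take $\B'$ to be the collection of finite intersections of elements of $\B^*$ with sets from $\{D_n, \RR \setminus D_n \mid n \in \omega\}$: this is a countable clopen basis for $\tau'$, still contained in $\bDelta^0_\xi$ by closure of $\Delta$ under finite Boolean operations.

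Polishness of $(\RR, \tau')$ is then obtained via the standard Kechris-style combination argument (Theorem 13.3 of \cite{kechris}), or equivalently via the closed embedding $x \mapsto (x, \chi_{D_0}(x), \chi_{D_1}(x), \dotsc) \in \RR \times \Can$ once the $D_n$'s are $\tau'$-clopen; a compatible complete metric $d'$ is then chosen. The $\bDelta^1_1$ case reduces to the previous one since countably many Borel sets collectively belong to $\bDelta^0_\xi$ for some countable $\xi$, and the resulting basis sits in $\bDelta^0_\xi \subseteq \bDelta^1_1 = \Delta$. The main obstacle I anticipate is the successor stage of the induction: if $\xi = \eta + 1$, the pieces $A_{n,k} \in \bPi^0_\eta$ still sit inside $\bDelta^0_\xi$, so the inductive hypothesis cannot naively be invoked with a strictly smaller parameter. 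The standard workaround is to strengthen the induction so it simultaneously controls families of $\bSigma^0$ and $\bPi^0$ sets, reducing ``make $A$ clopen'' to ``make $\RR \setminus A$ open'', which genuinely lowers the rank. Keeping $\B'$ inside $\Delta$ throughout the construction relies on the fact that $\bDelta^0_\xi$ is a $\sigma$-algebra absorbing all lower-rank $\bDelta^0$ sets together with their finite Boolean combinations, which is exactly what the inductive construction produces.
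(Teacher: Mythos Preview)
The paper does not give a proof of this lemma: it only states that the result ``is a simple variation (for the Baire space endowed with the usual topology) of Theorem 22.18 of \cite{kechris} (stated there, although in a slightly different form, as Exercise 22.20)'', so there is no argument in the text to compare yours against. The intended proof is the standard change-of-topology machinery (Kechris, Theorems 13.1--13.3 and 22.18), the only extra bookkeeping being to track the complexity of the clopen basis throughout.

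Your outline follows exactly this route and is essentially correct, with two small points worth tightening. First, in the successor case $\xi=\eta+1$ the basis $\B^*$ you obtain when making the $\bPi^0_\eta$-pieces $E_m$ clopen will in general lie in $\bDelta^0_{\eta+1}=\bDelta^0_\xi$, not in $\bigcup_{\mu<\xi}\bDelta^0_\mu$ as you write: declaring a $\bPi^0_\eta$ set $E$ clopen forces basic sets of the form $U\cap E$ and $U\cap\neg E$, and these are genuinely of level $\eta+1$. This is harmless for the conclusion, since you only need $\B'\subseteq\bDelta^0_\xi$. Second, your phrase ``reducing `make $A$ clopen' to `make $\RR\setminus A$ open', which genuinely lowers the rank'' is slightly misleading: making $\neg A$ open (by making its $\bPi^0_{<\eta}$-pieces clopen, which does lower the rank) only makes $A$ \emph{closed} in the refined topology; you still need one further step, namely adjoining the now-closed $A$ as a clopen set as in Kechris 13.1, and this is precisely where the basis jumps from $\bDelta^0_\eta$ to $\bDelta^0_{\eta+1}$. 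With these two adjustments your induction goes through cleanly. The tidiest packaging is to prove by induction on $\eta$ that any countable family in $\bPi^0_\eta$ can be made clopen with a clopen basis in $\bDelta^0_{\eta+1}$, and then derive the $\bDelta^0_\xi$ statement by applying this to the $\bPi^0_{<\xi}$-pieces of each $D_n$ and $\neg D_n$ and combining via Kechris 13.3.
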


\begin{lemma} \label{lemmafunction}
Let $d$, $\tau$, $\xi$ and $\Delta$ be as in the previous Lemma . Moreover,
let $\tau' \supseteq \tau$ be any zero-dimensional
 Polish topology on $\RR$ which admits a countable clopen basis $\B' \subseteq \Delta$. For any
$\Delta$-function  $f\colon \RR \to
\RR$ there is a metric $d_f$ on
$\RR$ such that
\begin{enumerate}[i)]
\item $(\RR,\tau_f)$ is Polish and zero-dimensional, where $\tau_f$ is the topology 
induced by $d_f$;
\item $\tau_f$ refines $\tau'$;
\item there is a countable clopen basis $\B_f$ for $\tau_f$ such that $\B_f
  \subseteq \Delta$;
\item $f \colon  (\RR,\tau_f) \to (\RR,\tau')$ is continuous.
\end{enumerate}
Moreover $d_f$ can be chosen in such a way that condition \textit{iv)} can 
be strengthened to
\begin{enumerate}[iv')]
\item $f \colon  (\RR,\tau_f) \to (\RR, \tau_f)$ is continuous.
\end{enumerate}
\end{lemma}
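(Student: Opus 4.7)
The plan is to iterate Lemma \ref{lemmarelativization} countably many times and then take a suitable join, following the refinement technique used in Theorem 13.11 of \cite{kechris}.

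First I would inductively construct a sequence $\tau' = \tau_0 \subseteq \tau_1 \subseteq \tau_2 \subseteq \dotsc$ of zero-dimensional Polish topologies on $\RR$, each equipped with a countable clopen basis $\B_n \subseteq \Delta$, such that $f \colon (\RR, \tau_{n+1}) \to (\RR, \tau_n)$ is continuous. For $n = 0$ I simply set $\tau_0 = \tau'$ and $\B_0 = \B'$. For the inductive step, the family $\B_n \cup \{f^{-1}(B) \mid B \in \B_n\}$ is a countable subfamily of $\Delta$ (using that $\B_n \subseteq \Delta$ and that $f$ is a $\Delta$-function), so Lemma \ref{lemmarelativization} applied to it produces a zero-dimensional Polish topology $\tau_{n+1}$ with a countable clopen basis $\B_{n+1} \subseteq \Delta$ relative to which every set in the given family is clopen. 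The clopen-ness of $\B_n$ forces $\tau_n \subseteq \tau_{n+1}$, and the clopen-ness of the preimages $f^{-1}(B)$ (for $B \in \B_n$) gives continuity of $f \colon (\RR, \tau_{n+1}) \to (\RR, \tau_n)$.

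Second, I would let $\tau_f$ be the topology generated by $\B_f = \bigcup_n \B_n$. Since each $\B_n$ is a basis for $\tau_n$ and $\tau_n \subseteq \tau_m$ for $n \leq m$, the family $\B_f$ is itself a countable basis for $\tau_f$, and each of its elements lies in some $\B_n \subseteq \Delta$ and is $\tau_n$-clopen, hence $\tau_f$-clopen. This yields \textit{ii)}, \textit{iii)}, and the zero-dimensionality part of \textit{i)}. To establish that $\tau_f$ is Polish I would use the diagonal embedding $\iota \colon \RR \to \prod_n (\RR, \tau_n)$, $x \mapsto (x, x, \dotsc)$. Its image is closed in the Polish product: if $(x^{(k)}, x^{(k)}, \dotsc)$ converges to $(y_n)$, then $x^{(k)} \to y_n$ in $\tau_n$ and, since each $\tau_n$ refines the Hausdorff topology $\tau_0$, all the $y_n$ must coincide. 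The subspace topology induced on $\iota(\RR) \cong \RR$ matches $\tau_f$ because the preimages under $\iota$ of subbasic rectangles are exactly finite intersections of elements of $\bigcup_n \tau_n$.

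Finally, for the strengthened continuity \textit{iv')} it suffices to verify $f^{-1}(B) \in \tau_f$ for every $B \in \B_f$; but if $B \in \B_n$ then $f^{-1}(B) \in \tau_{n+1} \subseteq \tau_f$ by construction, so \textit{iv')} holds and \textit{iv)} follows \emph{a fortiori}. The delicate point in the plan is checking that the join $\tau_f$ really is Polish and is transported to the diagonal embedding's subspace topology; once this is pinned down, the remaining verifications are straightforward bookkeeping on top of Lemma \ref{lemmarelativization}.
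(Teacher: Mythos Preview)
Your argument is correct and is precisely the ``standard argument'' the paper alludes to (the paper does not spell out a proof, only remarks that the lemma follows from Lemma~\ref{lemmarelativization} by standard arguments, referring implicitly to the refinement technique of Theorem~13.11 in \cite{kechris}). The iteration to secure \textit{iv')}, the verification that $\B_f=\bigcup_n\B_n$ is a genuine basis because the $\tau_n$'s are increasing, and the diagonal-embedding check that $\tau_f$ is Polish are exactly the expected steps.
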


Now we are ready to prove the main Theorem of this Section, which 
sharpens the argument used to prove Theorem
16 in \cite{andrettamartin}. Since our proof closely follows the original
one, we will only sketch it highlighting the
modification that one has to adopt in this new context.
Therefore the reader interested in a complete proof
should keep a copy of \cite{andrettamartin} on hand and read the
corresponding proofs parallel to one another.

\begin{theorem}[\NFS] \label{theorDP}
Let $\F$ be a Borel-amenable set of reductions. 
Assume that $A \leq_\F \neg A \notin \Delta_\F$. Then
$A$ has the decomposition property with respect to $\F$.
\end{theorem}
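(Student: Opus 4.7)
The plan is to argue by contradiction: assume that $A$ fails to have the decomposition property with respect to $\F$, and then construct a flip-set in $\Can$, contradicting $\NFS$. The argument parallels the proof of Theorem~16 of \cite{andrettamartin}, with the twist that we must first prepare a topology whose clopen basis lies inside $\Delta_\F$, so that Borel-amenability can play the role that continuity plays in the original Borel argument.

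First, I would fix some $f \in \F$ witnessing $A \leq_\F \neg A$, i.e.\ with $f^{-1}(A) = \neg A$. By Proposition~\ref{proponlyDxi}, $\Delta_\F$ is either $\bDelta^0_\xi$ for some nonzero $\xi < \omega_1$ or $\bDelta^1_1$, so Lemma~\ref{lemmafunction} applied to $f$ and $\Delta_\F$ produces a zero-dimensional Polish topology $\tau_f \supseteq \tau$ on $\RR$ admitting a countable clopen basis $\B_f \subseteq \Delta_\F$, such that $f\colon (\RR,\tau_f) \to (\RR,\tau_f)$ is continuous.

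Second, set
\[
 C = \bigcup\{U \in \B_f \mid A \cap U <_\F A\},
\]
a $\tau_f$-open subset of $\RR$. If $C = \RR$, then $(\RR,\tau_f)$ is covered by countably many $U_n \in \B_f$ with $A \cap U_n <_\F A$, so setting $D_0 = U_0$ and $D_{n+1} = U_{n+1} \setminus \bigcup_{k \leq n} U_k$ gives a $\Delta_\F$-partition of $\RR$ (each $D_n$ being a finite Boolean combination of elements of $\B_f$, and $\Delta_\F$ is an algebra containing $\B_f$). By Lemma~\ref{lemmaintersection}, $D_n \cap A \leq_\F U_n \cap A <_\F A$, contradicting the assumption that $A$ has no decomposition. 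Hence $F := \RR \setminus C$ is nonempty, and by maximality of $C$ together with Lemma~\ref{lemmaintersection}, every $x \in F$ and every $U \in \B_f$ with $x \in U$ satisfy $A \cap U \equiv_\F A$.

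Third, following the strategy of \cite{andrettamartin}, I would use the continuity of $f$ in $\tau_f$ and the flip property $f^{-1}(A) = \neg A$ to construct recursively a Cantor scheme $\{V_s \mid s \in 2^{<\omega}\} \subseteq \B_f$ of nonempty shrinking $\tau_f$-clopen sets whose associated continuous injection $\pi\colon \Can \to \RR$ has the property that for any $z, w \in \Can$ differing in exactly one coordinate, $\pi(z) \in A$ if and only if $\pi(w) \notin A$. Then $\pi^{-1}(A)$ is a flip-set, the desired contradiction with $\NFS$.

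The main obstacle is the recursive construction of the Cantor scheme. At stage $s$, given $V_s \in \B_f$ with $V_s \cap F \neq \emptyset$ (so that $A \cap V_s \equiv_\F A$), one must split $V_s$ into $V_{s\conc 0}, V_{s\conc 1} \in \B_f$ in such a way that the two branches encode a single-coordinate flip of $A$-membership. This is arranged by chasing $f$-images inside $V_s$, using continuity of $f$ in $\tau_f$ to keep the construction inside basic $\B_f$-clopen sets (hence inside $\Delta_\F$), and using Borel-amenability to glue local $\F$-reductions along these $\Delta_\F$-pieces. This is exactly the adaptation of the Andretta--Martin argument needed for an arbitrary Borel-amenable $\F$: the role previously played by standard basic open sets of $\RR$ and by continuity is now played by the new basis $\B_f \subseteq \Delta_\F$ and by condition~(2) of Definition~\ref{defBorelamenable}.
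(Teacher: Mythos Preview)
Your overall plan (contradiction via a flip-set) is correct, and passing to a zero-dimensional Polish topology whose clopen basis lies inside $\Delta_\F$ is exactly the new ingredient needed. The gap is that a \emph{single} application of Lemma~\ref{lemmafunction} does not suffice. The fixed $f$ witnessing $A \leq_\F \neg A$ is $\tau_f$-continuous, but the recursive step of the Cantor scheme cannot use $f$ itself: at stage $s$ with $A \cap V_s \equiv_\F A$ you need a ``local flip'' $g_s \in \F$ with ${\rm range}(g_s) \subseteq V_s$ and $x \in A \cap V_s \iff g_s(x) \in \neg A \cap V_s$ for $x \in V_s$. Such a $g_s$ does exist (this is a separate claim, obtained from $A \cap V_s \equiv_\F A$ together with Borel-amenability), but it is \emph{not} the original $f$, and there is no reason for $g_s$ to be $\tau_f$-continuous. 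Without continuity of $g_s$ in some controlled topology you cannot choose $V_{s\conc 0}, V_{s\conc 1} \subseteq V_s$ with small diameter \emph{and} small $g_s$-image, so the scheme does not shrink and no limit map $\pi\colon\Can\to\RR$ is produced. ``Chasing $f$-images'' does not help either, since $f(V_s)$ need not lie in $V_s$ nor meet $F$.

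The paper's proof fixes precisely this by iterating Lemma~\ref{lemmafunction}: one builds simultaneously a decreasing sequence $D_0 \supseteq D_1 \supseteq \dotsb$ in $\Delta_\F$ with $A \cap D_n \equiv_\F A$, local flips $f_n \in \F$ with range contained in $D_n$, and an \emph{increasing sequence} of zero-dimensional Polish metrics $d_n$ (each with a clopen basis inside $\Delta_\F$) such that $f_n\colon(\RR,\tau_{n+1}) \to (\RR,\tau_n)$ is continuous. The fresh topology at stage $n+1$ is what makes $f_n$ continuous and allows one to choose $D_{n+1}$ so that all compositions $g_m \circ \dotsb \circ g_n$ (each $g_i \in \{\restr{f_i}{D_{i+1}},\id\}$) have $d_m$-diameter $<2^{-n}$ on $D_{n+1}$; this Cauchy condition is what forces the limits to exist in the \emph{original} metric $d_0$ and yields the flip-set. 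Your outline has the right shape but collapses this tower of topologies to a single one, and that is where the argument breaks.
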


\begin{proof}
Suppose towards a
contradiction that for every $\Delta_\F$-partition
$\seq{D_n}{n \in \omega}$ of $\RR$ there is some $n_0 \in \omega$ such that $D_{n_0} \cap
A \equiv_\F A$. The next Claim is quite similar to Claim 16.1 in
\cite{andrettamartin}, but we will completely reprove it here in order to fill
a little gap in the original proof.

\begin{claim}\label{claimpartialreduction}
Let $D \in \Delta_\F$ and assume $A \cap D \equiv_\F A$. Then there is some
$f \in \F$ such that ${\rm range}(f) \subseteq D$ and
\[ \forall x\in D(x \in A \cap D \iff f(x) \in \neg A \cap D).\]
\end{claim}

\begin{proof}[Proof of the Claim]
Note that $D = \emptyset$ and $D \subseteq A$ are forbidden since $D <_\F A$ (and $A \cap D = D$ would contradict
 $A \cap D \equiv_\F A$), while if $D = \RR$ any
$\F$-reduction of $A$ to $\neg A$ will suffice. Thus we can assume $D \neq
\emptyset, \RR$ and $\neg A \cap D \neq \emptyset$. By Lemma
\ref{lemmaintersection} we have $\neg A \cap D \leq_\F \neg A \equiv_\F A
\equiv_\F A \cap D$ ($\neg A$ is nonempty because it is
selfdual). Let $h \in \F$ be such that $h^{-1}(A \cap D) =
\neg A \cap D$ and choose some $y \in \neg A \cap D$. Now put
\[ k(x) =
\begin{cases}
x & \text{if } x \in D\\
y & \text{if } x \notin D.
\end{cases}
\]
Note that $k \in \F$ (since $\F$ is Borel-amenable), and let $f = k \circ
h$. Clearly $f \in \F$ and ${\rm range}(f) \subseteq D$. We will now prove that $x \in \neg A \cap D \iff f(x) \in A \cap D$ for every $x \in D$ (which easily implies the result).  Let $x \in
D$: if $x \in \neg A \cap D$ then $h(x) \in A \cap D \subseteq D$ and
hence also $f(x) \in A \cap D$. Conversely, if $x \in A \cap D$ then
$h(x) \in \neg A \cup \neg D$: if $h(x) \in D$ then $f(x) = h(x) \in
\neg A \cap D$, otherwise $f(x) = y \in \neg A \cap D$ and
in both cases we are done.
\renewcommand{\qedsymbol}{$\square$ \textit{Claim}}
\end{proof}

One must now
construct the following sequences:
\begin{enumerate}[i)]
\item a sequence $\dotsc \subseteq D_1 \subseteq D_0=\RR$ of sets in $\Delta_\F$
such that $A \cap D_n \equiv_\F A$ for every $n \in \omega$
(in particular, $D_n \neq \emptyset$);
\item a sequence of functions $f_n \in \F$ as in the previous Claim,
  i.e.\ such that
\[ \forall x \in D_n (x \in A \cap D_n \iff f_n(x) \in \neg A \cap
D_n);\]
\item a sequence of separable complete metrics $d_n$ on $\RR$ such
that
$d_0$ is the usual metric on $\RR$,
  the topologies $\tau_n$
generated by the metrics $d_n$ are all zero-dimensional,
 $\tau_{n+1}$
refines $\tau_n$, $D_n$ is clopen with respect to $\tau_n$, every
$\tau_n$ admits a countable clopen basis $\B_n 
\subseteq \Delta_\F$, the function $f_n\colon  (\RR,\tau_{n+1})
\to (\RR,\tau_n)$ is continuous, and for every $m \leq n$ and every
$x,y \in D_{n+1}$
\begin{equation}\label{eqCauchy}
\tag{$*$}
d_m(g_m \circ \dotsc \circ g_n(x), g_m \circ \dotsc \circ
g_n(y))<2^{-n},
\end{equation}
where each $g_i$ is either $\restr{f_i}{D_{i+1}}$ or the identity on
$D_{i+1}$. 
\end{enumerate}

Then we can conclude our proof simply replacing
the  $B_n$'s with the $D_n$'s in the original proof (that is we can construct 
a flip-set from the sequences above: this gives the desired contradiction).

The construction of the required sequences is by induction on $n
\in\omega$: set $D_0=\RR$, and let $d_0$ be the usual metric on $\RR$
and $f_0 \in \F$ be any function witnessing $A \leq_\F \neg
A$. Then suppose that $D_m$, $\B_m$, $f_m$ and $d_m$ have been defined for
all $m \leq n$.  

\begin{claim}
  For each $m\leq n$ there is a $\Delta_\F$-partition $\langle C^i_m \mid i\in
\omega \rangle$ of $D_m$ such that $d_m$-${\rm diam}(C^i_m) < 2^{-n}$ and
$C^i_m$ is $\tau_m$-clopen for
every $i \in 
\omega$.
\end{claim}

\begin{proof}[Proof of the Claim]
  Since $\B_m \subseteq \Delta_\F$ is a countable basis for $\tau_m$, we
can clearly find a countably family $\{\hat{C}^i_m \mid i \in \omega\} \subseteq \Delta_\F$
such that $D_m \subseteq \bigcup_{i \in \omega} \hat{C}^i_m$ and
$d_m$-${\rm  diam}
(\hat{C}^i_m) < 2^{-n}$ for every $i \in \omega$. Now simply define
$C^0_m = \hat{C}^0_m \cap D_m$ and $C^{i+1}_m = (\hat{C}^{i+1}_m \cap D_m)
\setminus (\bigcup_{j \leq i} \hat{C}^j_m)$. Since $\Delta_\F$ is an algebra and each
 $\hat{C}^i_m$ is $\tau_m$-clopen, 
$\langle C^i_m \mid i \in \omega \rangle$ is the required
$\Delta_\F$-partition.
\renewcommand{\qedsymbol}{$\square$ \textit{Claim}}
\end{proof}

The inductive step can now be completed  as in the original proof
using the previous Claim and applying Lemma
\ref{lemmafunction}.
\end{proof}

Notice that, as for the Borel case,
the nonexistence of flip-sets is used in a ``local way'' in the
proof
of Theorem \ref{theorDP}: in fact the flip-set obtained is the continuous preimage of $A$ and
therefore the proof only requires that there are no flip-sets $\W$-reducible
to $A$. 
Observe also that this kind or argument (which is based on 
relativizations of topologies) cannot be applied beyond the Borel context. In 
fact, if $X$ and $Y$
are Polish spaces, $f\colon X \to Y$ is a Borel function and $A \subseteq X$ is a
Borel set such that $\restr{f}{A}$ is injective, then also $f(A)$ is Borel 
(see Corollary 15.2 in
\cite{kechris}). Now suppose that $\tau$ and $\tau'$ are two Polish topologies
on $X$ such that $\bDelta^1_1(X,\tau) \subsetneq \bDelta^1_1(X,\tau')$ and let
$A \in \bDelta^1_1(X,\tau') \setminus \bDelta^1_1(X,\tau)$. Applying the
preceding result to $f = \id \colon  (X, \tau') \to (X,\tau)$, 
we should have that $f(A) = \id(A) = A \in \bDelta^1_1(X,\tau)$, a contradiction!
Therefore we can not refine the standard topology $\tau$ of $\RR$ in order
to make
clopen (or even just Borel) a set which was not in $\bDelta^1_1(\RR,\tau)$
without losing the essential condition that the new space is still Polish\footnote{The
author would like to thank A.~Marcone for suggesting the present
argument which considerably simplify a previous proof of this fact.}. 
Nevertheless it is possible to study the hierarchies of degrees induced by
sets of reductions $\F \supsetneq \Bor$ using a different kind of argument
--- see the forthcoming \cite{mottorossuperamenability}.

Theorem \ref{theorDP} (together with Theorem \ref{theorstructure}) completes the description of
the degree-structure induced by
$\leq_\F$ when $\F$ is a Borel-amenable set of reductions,  showing
that the $\F$-structure looks like the structure
of the Wadge degrees:

\begin{small}
\begin{equation*}
\begin{array}{llllllllllllll}
\bullet & & \bullet & & \bullet & & & & \bullet & & & \bullet
\\
& \bullet & & \bullet & & \bullet & \cdots \cdots & \bullet
& & \bullet &\cdots\cdots & & \bullet & \cdots
\\
\bullet & & \bullet & & \bullet & & & & \bullet & & & \bullet
\\
& & & & & & &
\, \makebox[0pt]{$\stackrel{\uparrow}{\framebox{${\rm cof} = \omega$}}$}
& & & &
\, \makebox[0pt]{$\stackrel{\uparrow}{\framebox{${\rm cof} > \omega$}}$}
\end{array}
\end{equation*}
\end{small}

Recall also that
$\sf{BAR}$, in 
particular, contains almost 
all the cases already studied, namely continuous functions, 
$\bDelta^0_2$-functions and Borel functions: thus these results provide
an alternative proof for the results about those
degree-structures.
Moreover, we highlight that the principle $\SLOL$ is
needed only
to prove the well-foundness of $\leq_\F$, since all the other results
are provable under $\SLO^\F +\NFS+\DCR$. 

We conclude this Section with the following Corollary which completely
characterize the $\F$-selfdual degrees.

\begin{corollary}[\SLOL+\NFS+\DCR]\label{corcharacterizeselfdual}
Let $\F$ be a Borel-amenable set of reductions and let $A \subseteq
\RR$ be such that $A \notin \Delta_\F$. Then the following are
equivalent:
\begin{enumerate}[i)]
\item $A \leq_\F \neg A$;
\item $A$ has the decomposition property with respect to $\F$;
\item if $B$ is $\L$-minimal in $[A]_\F$ then $B \leq_\L \neg B$ and
  $B$ is either limit (of countable cofinality) 
or successor of a nonselfdual pair with respect to
 $\leq_\L$.
\end{enumerate}
\end{corollary}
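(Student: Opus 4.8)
The plan is to prove the cycle i) $\imp$ ii) $\imp$ iii) $\imp$ i), exploiting the fact that by Theorem \ref{theorDP} (which applies since $A \notin \Delta_\F$) the first two conditions are already linked in one direction, and that for the $\L$-layer we can invoke the known description of the Lipschitz hierarchy recalled in Section \ref{sectionpreliminaries}. Throughout I would let $B$ denote the $\L$-minimal element of $[A]_\F$, which exists because $\leq_\L$ is well-founded; note $B \notin \Delta_\F$ as well, since $\Delta_\F$ is closed under $\equiv_\F$.

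For i) $\imp$ ii): this is precisely the content of Theorem \ref{theorDP}, once we observe that $A \leq_\F \neg A$ together with $A \notin \Delta_\F$ gives $\neg A \notin \Delta_\F$ (as $\Delta_\F$ is selfdual), so the hypothesis ``$A \leq_\F \neg A \notin \Delta_\F$'' of that Theorem is met. For ii) $\imp$ i): this is exactly Lemma \ref{lemmapartition}(b): if $\seq{D_n}{n \in \omega}$ is a $\Delta_\F$-partition with $D_n \cap A <_\F A$ for all $n$, then $A \leq_\F \neg A$ (using $\SLO^\F$, which follows from $\SLOL$ and Lemma \ref{lemmabasic} since $\L \subseteq \F$). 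So the real work lies in connecting these with condition iii).

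For i) $\imp$ iii): assume $A \leq_\F \neg A$. Then $B \equiv_\F \neg B$, so by part \textit{vii)} of Theorem \ref{theorgeneralproperties} (in contrapositive form) $B$ cannot be $\L$-nonselfdual, i.e.\ $B \leq_\L \neg B$. Now suppose towards a contradiction that $B$ is an $\L$-successor of a \emph{selfdual} $\L$-degree, or an $\L$-limit of uncountable cofinality. In the Lipschitz hierarchy, a selfdual degree is always followed by another selfdual degree and an $\L$-limit of uncountable cofinality is nonselfdual; so the only remaining possibility for a selfdual $B$ is that it is the successor of a nonselfdual pair or a limit of countable cofinality — which is exactly what iii) asserts. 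The point to check carefully is that $B$ being selfdual in the $\L$-hierarchy forces it to sit immediately above a \emph{nonselfdual} pair or be a countable-cofinality limit: this follows from the structural facts recalled in the ``Wadge and Lipschitz degrees'' subsection (after a selfdual $\L$-degree comes a selfdual $\L$-degree; a limit $\L$-degree is selfdual iff it has countable cofinality). So iii) holds.

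For iii) $\imp$ i): assume $B \leq_\L \neg B$ and $B$ is either a countable-cofinality $\L$-limit or the $\L$-successor of a nonselfdual pair. If $B$ is the $\L$-successor of a nonselfdual pair $\{[C]_\L, [\neg C]_\L\}$, then by the basic Lipschitz facts $B \equiv_\L C \oplus \neg C$, which is $\L$-selfdual, hence $B \leq_\L \neg B$ and moreover $B \equiv_\F C\oplus\neg C \leq_\F \neg(C\oplus\neg C)\equiv_\F \neg B$, giving $A \equiv_\F B \leq_\F \neg B \equiv_\F \neg A$. If instead $B$ is an $\L$-limit of countable cofinality, pick an $\L$-chain $[B_0]_\L <_\L [B_1]_\L <_\L \dotsb$ with supremum $[B]_\L$; then each $B_n <_\F B \equiv_\F A$, and since $B = \bigoplus_n B_n$ up to $\L$-equivalence, $B$ is $\L$-selfdual by the cited fact, so again $A \leq_\F \neg A$. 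The main obstacle I anticipate is bookkeeping the translation between the $\L$-hierarchy structure and the $\F$-hierarchy — specifically making sure that ``$\L$-minimal in $[A]_\F$'' interacts correctly with selfduality — but part \textit{vii)} of Theorem \ref{theorgeneralproperties} and Lemma \ref{lemmabasic} handle exactly this, so the argument should go through without surprises.
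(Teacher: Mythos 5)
Your treatment of i) $\Leftrightarrow$ ii) and of iii) $\Rightarrow$ i) is fine and matches the paper, but your argument for i) $\Rightarrow$ iii) has two genuine gaps. First, the opening step --- deducing $B \leq_\L \neg B$ from $B \equiv_\F \neg B$ ``by part \textit{vii)} of Theorem \ref{theorgeneralproperties} in contrapositive form'' --- is a non-sequitur. Part \textit{vii)} says that if $B$ is $\F$-\emph{non}selfdual then $[B]_\F = [B]_\L$; its contrapositive says that if $[B]_\F \neq [B]_\L$ then $B$ is $\F$-selfdual, which is not what you need. In general $\F$-selfduality does \emph{not} transfer down to $\L$-selfduality (e.g.\ a properly open set is $\Bor$-selfdual but $\L$-nonselfdual), and the fact that the $\L$-minimal element of a selfdual $\F$-degree is $\L$-selfdual is precisely part of what must be proved here; it does not follow from any general monotonicity. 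A priori the $\L$-minimal elements of $[A]_\F$ could form an $\L$-nonselfdual pair $\{B,\neg B\}$, and nothing you cite rules this out.

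Second, even granting $B \leq_\L \neg B$, your case analysis on the position of $B$ in the Lipschitz hierarchy is a false dichotomy. You correctly discard ``limit of uncountable cofinality'' (such degrees are nonselfdual), but the case ``$B$ is the $\L$-successor of a selfdual $\L$-degree'' is \emph{not} excluded by $B$ being selfdual --- on the contrary, after a selfdual $\L$-degree there is always another selfdual $\L$-degree, so selfdual $\L$-degrees occur in long consecutive blocks and a generic one is the successor of another selfdual degree. Ruling this out requires using that $B$ is $\L$-minimal in its $\F$-degree, which you never do. The paper's route repairs both gaps at once: by Theorems \ref{theorDP} and \ref{theorstructure}, the selfdual degree $[A]_\F$ is either a limit of countable cofinality or the successor of an $\F$-nonselfdual pair in the $\F$-hierarchy; in the first case one exhibits $\bigoplus_n A_n$ (for a cofinal chain $A_0 <_\F A_1 <_\F \dotsc$, hence $A_0 <_\L A_1 <_\L \dotsc$ by Lemma \ref{lemmabasic}), and in the second case $C \oplus \neg C$ with $C \nleq_\F \neg C$; each is checked to be $\L$-selfdual, $\L$-minimal in $[A]_\F$, and in the required position of the $\L$-hierarchy, and then every $\L$-minimal $B$ is $\L$-equivalent to it by semi-linearity. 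You need this explicit construction; the shortcut through part \textit{vii)} and general Lipschitz structure does not go through.
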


\begin{proof}
That \textit{i)} is equivalent to \textit{ii)} follows directly from
Theorem \ref{theorDP} and Lemma \ref{lemmapartition}, and obviously
\textit{iii)} implies \textit{i)}. It remains to prove that
\textit{i)} implies \textit{iii)}. By Theorem \ref{theorDP} and
Theorem \ref{theorstructure}, we must
distinguish two cases: if $A$ is limit with respect to $\leq_\F$, then
there must be a countable chain $A_0 <_\F A_1 <_\F \dotsc$ such that
$A$ is the supremum of it: but in this case we get $A_0 <_\L A_1 <_\L
\dotsc$ by $\SLOL$ and Lemma \ref{lemmabasic}, 
and therefore it is easy to check that $\bigoplus_n A_n$ is
$\L$-selfdual, is limit in the $\L$-hierarchy and is also $\L$-minimal
in $[A]_\F$. Similarly, if $[A]_\F$ is a successor degree then there
must be some $C \subseteq \RR$ such that $C \nleq_\F \neg C$ and $A
\equiv_\F C \oplus \neg C$: in this case it is easy to check that $C
\oplus \neg C$ is $\L$-selfdual, is $\L$-minimal in $[A]_\F$, and its $\L$-degree is the successor (in the $\L$-hierarchy) 
of the nonselfdual
pair $\{ [C]_\L, [\neg C]_\L\}$.
\end{proof}

\section{The structure of $\sf{BAR}$}\label{sectionBAR}

We now want to study the structure  $\langle \sf{BAR}, \subseteq \rangle$. 
Clearly, as already observed in the previous
Sections, $\sf{D}^\Lip_1$ and $\Bor$ are, respectively, the minimum
and the maximum of this structure.
Let now $\emptyset \neq \mathscr{B} \subseteq \sf{BAR}$ and put 
$\bigwedge \mathscr{B} = \bigcap \mathscr{B}$.
Then $\bigwedge \mathscr{B} \in \sf{BAR}$ and,
by the properties of the intersection, $\bigwedge \mathscr{B}$ is the infimum for $\mathscr{B}$
(with respect to inclusion). Moreover, $\Delta_{\bigwedge\mathscr{B}} =
\bigcap_{\F \in \mathscr{B}} \Delta_\F$. For one direction 
$\Delta_{\bigwedge
  \mathscr{B}} \subseteq \bigcap_{\F \in \mathscr{B}} \Delta_\F$ by
definition: for the converse, let $D \in \bigcap_{\F \in \mathscr{B}}
\Delta_\F$ and let $g_0, g_1 \in \L$ be the constant fuctions with
value, respectively, $\vec{0}$ and $\vec{1}$. By Borel-amenability, $f
= (\restr{g_0}{D})\cup (\restr{g_1}{\neg D}) \in \F$ for every $\F \in
\mathscr{B}$, and $f^{-1}(\bN_{\langle 0 \rangle}) = D$: hence $D
\in \Delta_{\bigwedge \mathscr{B}}$. In particular, by Proposition
\ref{proponlyDxi} we have $\Delta_{\bigwedge
  \mathscr{B}}=\bDelta^0_\xi$, where $\xi =
\min\{\mu\mid \bDelta^0_\mu = \Delta_\F \text{ for some }\F \in
\mathscr{B}\}$.

Conversely, let $\mathscr{C}_\mathscr{B} = \{ \G \in \sf{BAR}\mid \F \subseteq \G
\text{ for every }\F \in \mathscr{B}\}$:
clearly $\mathscr{C}_\mathscr{B} \neq \emptyset$ (since $\Bor \in
\mathscr{C}_\mathscr{B}$), thus we can define
$\bigvee \mathscr{B} = \bigwedge \mathscr{C}_\mathscr{B}=\bigcap
\mathscr{C}_\mathscr{B}$.
Obviously $\bigvee \mathscr{B} \in \sf{BAR}$, and if $\G \in \sf{BAR}$
is such that
$\F \subseteq \G$ for every $\F \in \mathscr{B}$ then $\G \in
\mathscr{C}_\mathscr{B}$ by definition: hence $\bigvee \mathscr{B} \subseteq \G$ and
$\bigvee \mathscr{B}$ is the supremum for $\mathscr{B}$ with respect
to inclusion.
Thus we have proved that $\langle \sf{BAR},\subseteq \rangle$ is a
complete lattice with minimum and maximum.
Note however that, contrarily to $\bigwedge \mathscr{B}$, the supremum
$\bigvee \mathscr{B}$ has been defined in an undirected way and not
starting from the elements of $\mathscr{B}$. To give a direct
construction of $\bigvee \mathscr{B}$,
first consider the map $*$ which sends a generic set of reductions $\Lip
\subseteq \F
\subseteq \Bor$ such that \emph{$\Delta_\F$ is closed under finite 
intersections} (i.e.\ such that $\Delta_\F$ is an algebra) 
to the set
\[
\begin{split}
\F^* =\Bigl\{ \bigcup\nolimits_{n\in \omega} (\restr{f_n}{D_n})\mid\; & f_n \in \F \text{ for
  every }n \in \omega \text{ and}  \\
  & \langle 
D_n \mid n \in \omega \rangle \text{ is a }\Delta_\F\text{-partition
  of }\RR \Bigr\} .
\end{split} \]

It is easy to check that e.g.\ $\Lip^*= \sf{D}^\Lip_1$.

\begin{theorem}\label{theor*map}
  The map $*$ is a surjection on $\sf{BAR}$ such that:
\begin{enumerate}[i)]
\item $*$ is the identity on $\sf{BAR}$, i.e.\ if $\F \in
  \sf{BAR}$ then $\F^* = \F$;
\item $\F^*$ is the minimal Borel-amenable set of reductions (with
respect to
  inclusion)
  which contains $\F$.
\end{enumerate}
\end{theorem}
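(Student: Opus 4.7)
The plan is to handle the assertions in bootstrapping order. First, I would dispatch part \textit{i)}: if $\F \in \sf{BAR}$, then $\F \subseteq \F^*$ is trivial (use the partition $\langle \RR, \emptyset, \emptyset, \dotsc \rangle$), and $\F^* \subseteq \F$ is a direct application of condition $(2)$ of Borel-amenability. The surjectivity of $*$ onto $\sf{BAR}$ then follows formally, since every $\G \in \sf{BAR}$ is its own $*$-image.

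For part \textit{ii)}, the containment $\F \subseteq \F^*$ is immediate, and the minimality half is short: if $\G \in \sf{BAR}$ and $\F \subseteq \G$, then $\Delta_\F \subseteq \Delta_\G$, so every $\Delta_\F$-partition is a $\Delta_\G$-partition, and condition $(2)$ of Borel-amenability of $\G$ forces each $g = \bigcup_n (\restr{f_n}{D_n}) \in \F^*$ into $\G$.

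The substantive work is to show that $\F^*$ itself lies in $\sf{BAR}$. The routine closure checks can be dispatched quickly: $\Lip \subseteq \F \subseteq \F^*$ and $\F^* \subseteq \Bor$ are immediate, closure under composition follows because every $f \in \F$ is a $\Delta_\F$-function and $\Delta_\F$ is an algebra (so the partition pieces arising when composing two elements expressed in their $\F^*$-form are still in $\Delta_\F$), and a surjection $\RR \onto \F^*$ is obtained by coding the data $\seq{(f_n,D_n)}{n \in \omega}$ via the given surjections $\RR \onto \F$ and $\RR \onto \Delta_\F$. The main obstacle is verifying condition $(2)$ of Borel-amenability with respect to $\Delta_{\F^*}$-partitions rather than merely $\Delta_\F$-partitions, because in general $\Delta_\F \subsetneq \Delta_{\F^*}$: for instance, with $\F = \Lip$ a clopen set whose ``stabilization depth'' is unbounded lies in $\Delta_{\Lip^*}$ but not in $\Delta_\Lip$.

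To overcome this I would use a double decomposition. Given a $\Delta_{\F^*}$-partition $\seq{E_n}{n \in \omega}$ and functions $g_n = \bigcup_m (\restr{f_{n,m}}{D_{n,m}}) \in \F^*$, I would exploit $E_n \in \Delta_{\F^*}$ to pick $\tilde{g}_n = \bigcup_k (\restr{\tilde{f}^n_k}{\tilde{D}^n_k}) \in \F^*$ with $E_n = \tilde{g}_n^{-1}(\bN_{\langle 0 \rangle})$, so that $E_n$ decomposes as the disjoint union $\bigcup_k A^n_k$ with $A^n_k = \tilde{D}^n_k \cap (\tilde{f}^n_k)^{-1}(\bN_{\langle 0 \rangle}) \in \Delta_\F$. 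The sets $B^n_{k,m} = A^n_k \cap D_{n,m}$ then form a $\Delta_\F$-partition of $\RR$ indexed by $\omega^3$ (disjointness is checked by cases on which of the three coordinates differs, coverage by combining $\bigcup_k A^n_k = E_n$ with $\bigcup_m D_{n,m} = \RR$), and on each $B^n_{k,m}$ the combined function $g = \bigcup_n (\restr{g_n}{E_n})$ coincides with $f_{n,m} \in \F$. Re-indexing by $\omega$ exhibits $g$ in the form required for membership in $\F^*$.
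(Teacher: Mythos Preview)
Your proposal is correct and follows essentially the same route as the paper: part \textit{i)} and the minimality in \textit{ii)} are handled identically, closure under composition uses the same refinement $D_{n,k} = f_n^{-1}(C_k) \cap D_n$, and your ``double decomposition'' for condition $(2)$ --- first splitting each $E_n$ into $\Delta_\F$-pieces $A^n_k = \tilde{D}^n_k \cap (\tilde{f}^n_k)^{-1}(\bN_{\langle 0 \rangle})$ via a witnessing $\F^*$-reduction, then intersecting with the $D_{n,m}$ --- is exactly the paper's argument. The only cosmetic difference is that you explicitly build the surjection $\RR \onto \F^*$, whereas the paper invokes the remark following Definition~\ref{defBorelamenable} that $\F^* \subseteq \Bor$ already guarantees it.
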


\begin{proof}
Part \textit{i)}
is obvious, while for part \textit{ii)} it is enough to observe that
if $\G$ is any set of reductions which
contains $\F$ then $\Delta_\F \subseteq \Delta_\G$: hence if $\G$
satisfies the second condition in Definition \ref{defBorelamenable}
then it must contain all the functions from
$\F^*$.
Since if $\Lip \subseteq \F \subseteq \Bor$ then also $\Lip \subseteq \F^* \subseteq \Bor$ (as $\F \subseteq \F^*$), it remains only to show that if $\Delta_\F$ is closed under finite intersection then $\F^*$ is closed under composition and satisfies
  the second condition in Definition \ref{defBorelamenable}. Let $f =
  \bigcup_{n\in \omega}(\restr{f_n}{D_n})$
  and $g = \bigcup_{k\in \omega}(\restr{g_k}{C_k})$ be two functions from
  $\F^*$, and for every $n,k \in \omega$ put $D_{n,k}
  = f_n^{-1}(C_k) \cap D_n$. Since $f_n \in \F$ is a
  $\Delta_\F$-function and $\Delta_\F$ is closed under finite
  intersections we have that $\langle D_{n,k}\mid n,k \in \omega \rangle$ is
  a $\Delta_\F$-partition of $\RR$. Moreover 
  $g_k \circ f_n \in \F$ and $g \circ f = \bigcup_{n,k \in \omega}(\restr{g_k \circ f_n}{D_{n,k}})$,
hence $g \circ f \in \F^*$ by definition.

Let now $\langle D_n\mid n \in \omega \rangle$ be a $\Delta_{\F^*}$-partition
of $\RR$: we claim that it admits a refinement to a
$\Delta_\F$-partition. In fact, fix any $n \in \omega$ and let $g=\bigcup_{k \in \omega}
(\restr{g_k}{C_k}) \in \F^*$ be a reduction of $D_n$ to $\bN_{\langle
  0 \rangle}$: then the sets $g_k^{-1}(\bN_{\langle 0 \rangle}) \cap
C_k$ form a $\Delta_\F$-partition of $D_n$. Thus we can safely assume
that each $D_n$ is in $\Delta_\F$. Let now $\{ f_n\mid n \in \omega\}
\subseteq \F^*$, $\langle D'_{n,k}\mid k \in \omega
\rangle$ be a $\Delta_\F$-partition of $\RR$, and
$\{f_{n,k}\mid k \in \omega\} \subseteq \F$ be such that $f_n =
\bigcup_{k \in \omega} (\restr{f_{n,k}}{D'_{n,k}})$ for every $n$. Clearly the sets
$D_{n,k} = D_n \cap D'_{n,k}$ form a $\Delta_\F$-partition of $\RR$:
hence the function
\[ f= \bigcup_{n \in \omega} (\restr{f_n}{D_n}) = \bigcup_{n,k \in \omega}
(\restr{f_{n,k}}{D_{n,k}}) \]
is in $\F^*$ by definition, and $\F^*$ satisfies the second condition 
of Definition \ref{defBorelamenable}. 
\end{proof}

Let now
$\hat{\mathscr{B}}$ be the closure
under composition of
$\bigcup \mathscr{B}$, and let $\bigvee \mathscr{B}$ be obtained
applying the map $*$ to $\hat{\mathscr{B}}$, i.e.\
$\bigvee \mathscr{B}= (\hat{\mathscr{B}})^*$.
It is not hard to check that $\hat{\mathscr{B}}$ is a set of
reductions such that $\Lip \subseteq \hat{\mathscr{B}} \subseteq
\Bor$, and that $\Delta_{\hat{\mathscr{B}}} = \bigcup_{\F \in \mathscr{B}} \Delta_\F$ 
(to see this use the fact that every function in
$\hat{\mathscr{B}}$ is the composition of a \emph{finite} number of
functions from $\bigcup \mathscr{B}$): 
hence $\Delta_{\hat{\mathscr{B}}}$ is an algebra and 
$\bigvee \mathscr{B} \in \sf{BAR}$ by Theorem
\ref{theor*map}. Moreover, if $\G \in \sf{BAR}$ is such that 
$\F \subseteq \G$ for every $\F \in \mathscr{B}$, then
$\hat{\mathscr{B}} \subseteq \G$ and thus $\bigvee \mathscr{B} \subseteq
\G$ by Theorem \ref{theor*map} again. Therefore $\bigvee \mathscr{B}$
is the supremum  of $\mathscr{B}$ with respect to inclusion.
Contrarily to the infimum case, one can still prove that
$\Delta_{\bigvee
  \mathscr{B}} \supseteq \bigcup_{\F \in
  \mathscr{B}} \Delta_\F$, but in some cases the other inclusion can
fail. In fact, given e.g.\ $\mathscr{B} =
\{ \sf{D}_m\mid m \in\omega\}$, we have that $\bigvee
\mathscr{B}$ is formed by those functions $f$ which are
in $\bigcup \mathscr{B}$ on a $\bDelta^0_\omega$-partition 
 (a collection which is different from $\sf{D}_\omega$, see
later in this Section): thus
\[\bigcup_{\F \in \mathscr{B}}
\Delta_\F = \bigcup_{n \in \omega} \bDelta^0_n \subsetneq \bDelta^0_\omega =
\Delta_{\bigvee \mathscr{B}}.\]
However it is not hard to see that $\Delta_{\bigvee
  \mathscr{B}} = \bDelta^0_\xi$, where $\xi =\sup\{
\mu\mid \bDelta^0_\mu = \Delta_\F \text{ for some }\F \in
\mathscr{B}\}$. Therefore we have $\Delta_{\bigvee\mathscr{B}} =\bigcup_{\F \in
  \mathscr{B}} \Delta_\F$ if and only if there is some $\F \in
\mathscr{B}$ such that $\Delta_\F=\bDelta^0_\xi = \Delta_{\bigvee
\mathscr{B}}$.

Put now $\F \equiv \G$ just in case $\Delta_\F = \Delta_\G$.
If we assume $\SLOL+\NFS+\DCR$ and $\F,\G \in \sf{BAR}$, then
$\F \equiv \G$ if and only if $\F \simeq \G$ 
(by Theorem \ref{theorDP} and Theorem \ref{theorequivalentreductions}),
hence it is quite  natural
to consider the quotient $\sf{BAR}/_\equiv$ together with the relation
$\preceq$ defined by
\[ {[\F]_\equiv \preceq [\G]_\equiv} \iff {\Delta_\F \subseteq
\Delta_\G}\]
(again, assuming $\SLOL+\NFS+\DCR$, we have $[\F]_\equiv \preceq
[\G]_\equiv$ if and only if $A \leq_\F B \imp A \leq_\G
B$ for every $A,B \subseteq \RR$). 
It follows from Proposition \ref{proponlyDxi} that this structure
is a well-founded linear order of
length $\omega_1+1$. Each equivalence class is of the form
$\{ \F \in \sf{BAR}\mid \Delta_\F = \bDelta^0_\xi\}$
for some $1 \leq \xi \leq \omega_1$ (similarly to the case of
the set of functions $\sf{D}_{\omega_1}$, for notational simplicity we
put $\bDelta^0_{\omega_1}=\bDelta^1_1$), and for this reason we will
say that $\F \in
\sf{BAR}$ is \emph{of level $\xi$}
if $\Delta_\F=
\bDelta^0_\xi$. Moreover,  if we consider a single
equivalence class endowed with the inclusion relation, arguing as
before we get again a complete lattice with minimum and maximum
(here the minimum and the maximum are, respectively, $\sf{D}^\Lip_\xi$
and $\sf{D}_\xi$, where $1 \leq \xi \leq \omega_1$ is the level of any of the
sets of reductions in the equivalence class considered).\\

We now want to give some examples of (different) Borel-amenable sets of
reductions, showing at once that each level of $\sf{BAR}$ contains more than
one element and that there are $\F \subsetneq \G \in \sf{BAR}$ such that $\Delta_\F  = \Delta_\G$ (so that, in particular, $\F \neq \Sat(\F)$).
We extend the notation introduced on page
\pageref{proponlyDxi}.

\begin{defin}
Let $\F,\G$ be two sets of reductions. We will denote by $\F^\G$
the set of all the functions which are \emph{in $\G$ on a $\Delta_\F$-partition}.
In particular, for any nonzero ordinal $\xi\leq\omega_1$, we will
denote by $\sf{D}_\xi^\W$ the set of all the functions which are 
\emph{continuous on a
$\bDelta^0_\xi$-partition}.
\end{defin}

\begin{remark}\label{remDWxi}
  One must be cautious and pay attention to the definition of
  $\sf{D}^\W_\xi$, since it must be distincted from the set
\[ \begin{split}
\tilde{\sf{D}}^\W_\xi = \{f \in {}^\RR \RR \mid \;&\text{there is a
}\bDelta^0_\xi\text{-partition } \langle D_n\mid n \in \omega \rangle \text{
  of } \RR\\
&\text{such that
}\restr{f}{D_n}\text{ is continuous for every }n\}.
\end{split} \]
Clearly $\sf{D}^\W_\xi \subseteq \tilde{\sf{D}}^\W_\xi$ for every $\xi
\leq \omega_1$, and if $\xi \leq 2$ then we have also $\sf{D}^\W_\xi=
\tilde{\sf{D}}^\W_\xi$. But if $\xi>2$ then $\sf{D}^\W_\xi \subsetneq
\tilde{\sf{D}}^\W_\xi$.
To see this, put $D = \{ x \in \RR \mid  \forall n \exists m>n
(x(m) \neq 0)\}$. Clearly $D$ and $\neg D$
form a $\bDelta^0_3$-partition of $\RR$. For every $x \in \RR \cup \seqo$ let $N_x =
\{ n < \leng(x) \mid x(n) \neq 0 \}$ and define the function $f\colon \RR
\to \RR$ by letting $f(x) = \langle x(n)\mid n \in N_x \rangle$ if $x \in D$ and $f(x) = \vec{0}$ otherwise.
It is not hard to check that $\restr{f}{D}$ and $\restr{f}{\neg D}$
are continuous, thus $f \in \tilde{\sf{D}}^\W_\xi$ for every $\xi>2$.
 Nevertheless one can
prove that there is no $\xi \leq \omega_1$ such that $f \in \sf{D}^\W_\xi$.
This is a consequence of the following Claim.

\begin{claim}\label{claim6.2.1}
  Let $\langle C_n\mid n \in \omega \rangle$ be \emph{any} partition of $\RR$ and
$\{ f_n\mid n \in \omega \} \subseteq {}^{\RR} \RR$ be 
such that $\restr{f}{C_n}=\restr{f_n}{C_n}$. Then $f_{n_0}$ is not continuous
for some $n_0 \in \omega$.
\end{claim}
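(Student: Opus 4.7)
My plan is to suppose toward a contradiction that every $f_n$ is continuous, and to produce an uncountable subset of $f(D)$ that is forced into a countable set of values, thereby deriving a contradiction. The key observation is that $f$ is unchanged by pre-pending zeros: if $z \in D$ and $w_k^z := 0^{(k)} \conc z$, then $N_{w_k^z} = \{k+m \mid m \in N_z\}$, so
\[ f(w_k^z) = \langle w_k^z(n) \mid n \in N_{w_k^z} \rangle = \langle z(m) \mid m \in N_z \rangle = f(z) \]
for every $k \in \omega$, while $d(w_k^z, \vec{0}) \le 2^{-k} \to 0$, so $w_k^z \to \vec{0}$.

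Given this, I would argue as follows. Fix any $z \in D$ and look at the sequence $\seq{w_k^z}{k \in \omega}$ converging to $\vec{0}$. Since $\{C_n \mid n \in \omega\}$ is a countable partition of $\RR$, by the pigeonhole principle there is some $n(z) \in \omega$ such that $w_k^z \in C_{n(z)}$ for infinitely many $k$; along this subsequence, $f_{n(z)}(w_k^z) = f(w_k^z) = f(z)$. Assuming $f_{n(z)}$ is continuous, passing to the limit gives
\[ f_{n(z)}(\vec{0}) = \lim_{k} f_{n(z)}(w_k^z) = f(z). \]
Hence $f(z) \in \{f_n(\vec{0}) \mid n \in \omega\}$ for every $z \in D$, so $f(D)$ is contained in a countable set.

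To reach a contradiction, I would note that $f(D)$ is uncountable: for any $y \in \RR$ with $y(n) \neq 0$ for all $n$, the sequence $y$ itself lies in $D$ and satisfies $N_y = \omega$, so $f(y) = y$; the set of such $y$ has the cardinality of the continuum. This contradicts the countability of $\{f_n(\vec{0}) \mid n \in \omega\}$, completing the proof. There is no real obstacle to surmount: the only nontrivial point is spotting the ``zero-prepending'' trick that makes $f$ constant along a sequence approaching $\vec{0}$, after which a one-line pigeonhole-plus-continuity argument does the job without any hypothesis on the complexity of the partition $\seq{C_n}{n \in \omega}$.
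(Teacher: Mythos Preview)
Your proof is correct and takes a genuinely different route from the paper's. The paper applies the Baire Category Theorem to find an index $n_0$ with $\bN_s \subseteq \Cl(C_{n_0})$ for some $s$; then, using that $f$ is continuous on $D$, it extends the agreement $\restr{f_{n_0}}{C_{n_0}} = \restr{f}{C_{n_0}}$ to all of $\bN_s \cap D$, and exhibits two sequences in $\bN_s \cap D$ converging to $s \conc \vec{0}$ whose $f$-images are distinct constants, contradicting the continuity of $f_{n_0}$. Your argument avoids Baire Category entirely: the zero-prepending observation produces, for each $z \in D$, a sequence in $D$ converging to $\vec{0}$ along which $f$ is constantly $f(z)$; pigeonhole then forces $f(z) = f_{n(z)}(\vec{0})$ for some $n(z)$, and a cardinality count on $f(D)$ finishes. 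Your approach is more elementary (no BCT, no appeal to the continuity of $\restr{f}{D}$) and yields the contradiction globally via cardinality; the paper's approach is more local, pinning the discontinuity to a specific $f_{n_0}$ determined by the partition.
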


\begin{proof}[Proof of the Claim]
  Since the $C_n$'s cover $\RR$, by the Baire Category Theorem there must be
some $n_0\in \omega$ such that $C_{n_0}$ is not nowhere dense, i.e.\ such that
$\bN_s \subseteq
\Cl(C_{n_0})$ for some $s \in \seqo$.
Observe that for every $A \subseteq
\RR$ and every continuous function $g\colon \RR \to \RR$,
if $\restr{g}{A \cap D} = \restr{f}{A \cap D}$ then
$\restr{g}{\Cl(A)\cap D} = \restr{f}{\Cl(A)\cap D}$ (by 
the continuity of $f$ and $g$ on $D$). Now assume,
towards a contradiction, that $f_{n_0}$ is continuous: then
$\restr{f_{n_0}}{\bN_s \cap D}=\restr{f}{\bN_s \cap D}$. For every $k \in
\omega$ put $x_k = s \conc 0^{(k)} \conc \vec{1} \in \bN_s \cap D$ and
$y_k = s \conc 0^{(k)} \conc \vec{2} \in \bN_s \cap D$, and check that
$f_{n_0}(x_k) = f(x_k) = t \conc \vec{1}$ and $f_{n_0}(y_k)=f(y_k)=t
\conc \vec{2}$ for every $k \in \omega$, where $t = \langle s(n) \mid
n \in N_s \rangle$. Since $x_k \rightarrow s \conc \vec{0}$ and $f_{n_0}$
is continuous on the whole $\RR$, we must have $f_{n_0}(s \conc \vec{0})=t \conc
\vec{1}$. Similarly, since $y_k \rightarrow s \conc \vec{0}$, by continuity
of $f_{n_0}$ again we should have $f_{n_0}(s \conc \vec{0}) = t \conc \vec{2}
\neq t \conc \vec{1}$, a contradiction! Thus $f_{n_0}$ can not be continuous
and we are done.
\renewcommand{\qedsymbol}{$\square$ \textit{Claim}}
\end{proof}

\end{remark}

 Observe now that $\sf{D}_1 = \sf{D}^\W_1$ and that, in particular,
 $\sf{D}^\W_\xi \subseteq \sf{D}_\xi$ for any nonzero
 $\xi\leq\omega_1$. By a remarkable  Theorem
of Jayne and Rogers (Theorem 5 in \cite{jaynerogers}) we have that
$\sf{D}_2 =
\sf{D}^\W_2$, and as an obvious corollary one gets also $\sf{D}_2 \simeq \sf{D}^\W_2$.
The Jayne-Rogers Theorem (and its mentioned corollary) were
used in
\cite{andrettamoreonwadge} to observe
that the so-called \emph{backtrack functions}  are exactly (and thus give
the same hierarchy of degrees as) the functions in $\sf{D}_2$:
this allowed to use all the combinatorics arising from
the backtrack game (for a definition of this game see
  \cite{vanwesepthesis} or \cite{andrettamoreonwadge}) for the
study of the $\sf{D}_2$-hierarchy, and  thus it seems desirable
 to find some extension of
the Jayne-Rogers Theorem in order to simplify the study of
$\leq_{\sf{D}_\xi}$ when  $2<\xi<\omega_1$ (this problem
was first posed by Andretta in his \cite{andrettaslo}).
The first obvious generalization is the statement $\sf{D}^\W_\xi =
\sf{D}_\xi$, but this immediately fails for every $2 < \xi \leq \omega_1$
by the
counter-example given in Remark \ref{remDWxi}. A slightly  weaker
generalization (which is not in contrast with this Remark)
leads to
the following Conjecture.

\begin{conj}\label{conj1}
Let $\xi<\omega_1$ be any nonzero ordinal. Then $\sf{D}_\xi =
\tilde{\sf{D}}^\W_\xi$.
\end{conj}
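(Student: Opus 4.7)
The plan is to prove the equality by double inclusion. The direction $\tilde{\sf{D}}^\W_\xi \subseteq \sf{D}_\xi$ is straightforward: given a $\bDelta^0_\xi$-partition $\langle D_n \mid n \in \omega \rangle$ of $\RR$ on each piece of which $f$ is continuous, for any $S \in \bSigma^0_\xi$ one has $f^{-1}(S) = \bigcup_{n \in \omega} (\restr{f}{D_n})^{-1}(S)$. Each summand is relatively $\bSigma^0_\xi$ in $D_n$, hence of the form $A_n \cap D_n$ with $A_n \in \bSigma^0_\xi$; since $D_n \in \bDelta^0_\xi$ and $\bSigma^0_\xi$ is closed under finite intersections, $f^{-1}(S) \in \bSigma^0_\xi$, so $f \in \sf{D}_\xi$.

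For the converse inclusion $\sf{D}_\xi \subseteq \tilde{\sf{D}}^\W_\xi$ I would proceed by transfinite induction on $\xi$. The base case $\xi = 1$ is trivial, and the case $\xi = 2$ is precisely the Jayne-Rogers theorem, which supplies a countable cover of $\RR$ by closed sets on each of which $f$ is continuous; this is refinable to a $\bDelta^0_2$-partition in the usual way. For successor levels $\xi = \eta + 1$ with $\eta \geq 2$, the induction would be driven by the sub-claim $(\star)$: every $f \in \sf{D}_{\eta+1}$ admits a countable covering of $\RR$ by sets $C_n \in \bPi^0_\eta$ such that $\restr{f}{C_n}$ extends to some function in $\sf{D}_\eta$. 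Granted $(\star)$, the induction hypothesis yields for each $n$ a $\bDelta^0_\eta$-partition of $\RR$ on which the extension is continuous; intersecting with $C_n$ and disjointifying the cover via $C_n \setminus \bigcup_{k < n} C_k$ produces a $\bDelta^0_{\eta+1}$-partition of $\RR$ (using that $\bPi^0_\eta, \bDelta^0_\eta \subseteq \bDelta^0_{\eta+1}$) on each piece of which $f$ is continuous. At a limit $\xi$ one would combine the representation $\bSigma^0_\xi = \{\bigcup_n A_n \mid A_n \in \bPi^0_{\nu_n},\ \nu_n < \xi\}$ with a diagonal argument to reduce to countably many successor-level problems.

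The main obstacle is clearly the sub-claim $(\star)$, which is the genuine generalisation of Jayne-Rogers and goes far beyond the bookkeeping of the induction; even at level $\eta = 2$ (i.e.\ $\xi = 3$) it is a delicate result. A plausible route combines the topological refinement machinery of Lemma \ref{lemmafunction} with a game-theoretic selection argument in the spirit of the backtrack game mentioned just before the conjecture: on each play one would aim to select a $\bPi^0_\eta$-piece on which the $\bDelta^0_\eta$-reduction data witnessing $f \in \sf{D}_{\eta+1}$ can be read off continuously from Borel codes. An alternative is to relativise the argument of Theorem \ref{theorDP} to a refined Polish topology making $\bDelta^0_\eta$-sets clopen; however the discussion following Theorem \ref{theorDP} already warns that such refinements cannot be iterated beyond the Borel context while preserving the Polish character, so considerable subtlety is required. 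I expect essentially all of the difficulty of the conjecture to concentrate on this single point.
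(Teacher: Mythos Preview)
There is a fundamental problem: the statement you are trying to prove is labelled a \emph{Conjecture} in the paper precisely because it is not known to hold, and in fact the paper immediately shows that it \emph{fails} for every $\xi \geq \omega$. The counterexample is the Pawlikowski function $P$: it is of Baire class~$1$, hence lies in $\sf{D}_\omega$, yet for \emph{any} countable partition $\langle A_n \mid n \in \omega\rangle$ of $\RR$ there is some $n_0$ with $\restr{P}{A_{n_0}}$ discontinuous (see \cite{cichonmorayne}). Thus $P \in \sf{D}_\omega \setminus \tilde{\sf{D}}^\W_\omega$, and your inductive argument cannot possibly succeed at the limit stage $\xi = \omega$; the unspecified ``diagonal argument'' you propose there is exactly where the proof must break down.

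For finite $\xi$ the situation is genuinely open: the paper explicitly notes that it is unknown whether Conjecture~\ref{conj1} holds for $2 < \xi < \omega$. Your sub-claim $(\star)$ at successor levels is therefore not merely ``delicate'' but an open problem already for $\xi = 3$, and no proof of it is known. (Note also that $(\star)$ is close in spirit to the paper's Conjecture~\ref{conj2}, which is likewise shown to fail at $\xi = \omega$ via Solecki's theorem and the same function $P$.) In short, your easy inclusion $\tilde{\sf{D}}^\W_\xi \subseteq \sf{D}_\xi$ is correct, but the converse is false in general and open for the remaining finite levels, so no proof of the full conjecture can exist.
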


Unfortunately, as Andretta already observed in \cite{andrettaslo}, this Conjecture is not true for any
$\xi \geq \omega$. In fact there is a function $P \colon \RR \to \RR$ (called
Pawlikowski function\footnote{In \cite{cichonmorayne} the Pawlikowski function 
was defined on a space which is homeomorphic to the Cantor space $\Can$, but
it is clear that it can be extended to a function $P$ defined on $\RR$ without
losing the various properties of the original function (except for
injectivity, which is not needed here).}) which is
of Baire class 1 (hence it is also in $\sf{D}_\omega$) but has the property
that for \emph{any}
countable partition
$\langle A_n\mid n \in \omega \rangle$ of $\RR$ there is some $n_0
\in \omega$ such that $\restr{P}{A_{n_0}}$ is not continuous 
(see Lemma 5.4 in \cite{cichonmorayne}): this
means that $\tilde{\sf{D}}^\W_\xi \subsetneq \sf{D}_\xi$ for every
$\xi \geq \omega$. 
Since both $\sf{D}^\W_\xi$ and $\tilde{\sf{D}}^\W_\xi$ are
Borel-amenable, all these observations show that
we have at least two (respectively,
three) Borel-amenable sets of reductions of level $\xi$ for $\xi>2$
(respectively, $\xi\geq \omega$).
Nevertheless note that the
counter-example $P$ does not allow to prove the failure of
Conjecture \ref{conj1} for finite levels since $P$ turns out to be a
``proper'' $\bDelta^0_\omega$-function, i.e.\  $P \notin
\sf{D}_n$ for any $n \in \omega$ (this fact will be explicitly
proved in the forthcoming \cite{mottorosbairereductions}): 
hence it remains an open problem to
determine if Conjecture \ref{conj1} holds when $2<\xi<\omega$.

One could think that Conjecture \ref{conj1} fails 
in the general case because is too strong,
and that the
Jayne-Rogers Theorem could admit a weaker generalization which
holds for every $\xi < \omega_1$. This objection suggests
to formulate the following Conjecture, in which we require that for any
$\bDelta^0_\xi$-function there must be some $\bDelta^0_\xi$-partition
$\langle A_n\mid n \in \omega \rangle$ of $\RR$ such that $\restr{f}{A_n}$ is
only ``simpler'' then $f$ (instead of continuous).

\begin{conj}\label{conj2}
Let $\xi <\omega_1$ be a nonzero ordinal. Then $f\colon \RR \to \RR$ is a
$\bDelta^0_\xi$-function if and only if there is a
$\bDelta^0_\xi$-partition $\langle A_n\mid n \in \omega \rangle$ of $\RR$ such
that, for every $n \in \omega$, $\restr{f}{A_n}$ is
$\bDelta^0_{\mu_n}$-function (for some $\mu_n< \xi$), i.e.\ $f^{-1}(D)$ is in $\bDelta^0_{\mu_n}$ \emph{relatively to $A_n$} whenever $D \in \bDelta^0_{\mu_n}$.
\end{conj}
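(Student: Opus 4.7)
The $\Leftarrow$ direction is routine. By transfinite induction on $\eta \geq \mu$ one sees that every $\bDelta^0_\mu$-function is also a $\bDelta^0_\eta$-function (the successor step uses that $\bSigma^0_{\eta+1}$ is the countable union of $\bPi^0_\eta$'s, and the limit step handles each basic piece via the inductive hypothesis). Applying this with $\eta = \xi$, each $f \restriction A_n$ is $\bDelta^0_\xi$-measurable as a function from the subspace $A_n$, so $f^{-1}(D) \cap A_n \in \bDelta^0_\xi(\RR)$ for every $D \in \bDelta^0_\xi(\RR)$ (the subspace $\bDelta^0_\xi$ of a $\bDelta^0_\xi$-set embeds into the ambient $\bDelta^0_\xi$). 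Hence $f^{-1}(D) = \bigcup_n (f^{-1}(D) \cap A_n) \in \bSigma^0_\xi$, and applying the same argument to $\RR \setminus D$ yields $f^{-1}(D) \in \bDelta^0_\xi$.

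For the $\Rightarrow$ direction --- the genuine content of the conjecture --- my plan is a transfinite induction on $\xi$, taking the Jayne-Rogers theorem (Theorem 5 of \cite{jaynerogers}) as the base case $\xi = 2$; the case $\xi = 1$ is vacuous. For a successor $\xi = \eta+1$, given a $\bDelta^0_{\eta+1}$-function $f$, I would first invoke Lemma \ref{lemmafunction} to refine the usual topology $\tau$ to a zero-dimensional Polish topology $\tau_f$ with a clopen basis $\mathcal{B}_f \subseteq \bDelta^0_{\eta+1}(\tau)$ on which $f$ is continuous. I would then emulate the Jayne-Rogers strategy one level up by playing a backtrack-style game (in the spirit of those discussed in \cite{andrettamoreonwadge}) inside $(\RR, \tau_f)$ whose Borel-determinacy produces a countable partition of $\RR$ into $\mathcal{B}_f$-clopen --- hence $\bDelta^0_{\eta+1}(\tau)$ --- pieces on each of which $f$ can be shown to be $\bDelta^0_\eta$-measurable with respect to $\tau$. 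For limit $\xi$, I would fix a cofinal sequence $\eta_k \nearrow \xi$ and iteratively peel off pieces $A_k \in \bDelta^0_\xi$ on which $f$ becomes $\bDelta^0_{\eta_k+1}$-measurable, completing the decomposition by applying the successor step inside each such piece.

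The hard part is undoubtedly the successor step. The base case alone is already the Jayne-Rogers theorem --- a highly nontrivial result --- and pushing it up is delicate precisely because the Pawlikowski function, discussed immediately before the conjecture, rules out any uniform bound on the $\mu_n$'s. Any proof must therefore exploit in a genuine way how $f^{-1}$ distributes complexity across the Borel hierarchy rather than assign a single rank to the whole function. In particular, mere topological refinement as in Section \ref{sectionDP} is insufficient: refinement makes $f$ continuous in a \emph{finer} topology but says nothing about its complexity in the original topology when restricted to a specific $\bDelta^0_\xi$-piece. Thus each inductive step is likely to require a new determinacy-theoretic argument tailored to level $\xi$, consistent with the conjecture's status as genuinely open at the time of writing.
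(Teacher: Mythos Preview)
The proposal has a fundamental problem: you are attempting to prove a statement that the paper \emph{disproves}. Immediately after stating Conjecture~\ref{conj2}, the paper shows that it \emph{fails} for $\xi=\omega$ (assuming $\DCR$). The Pawlikowski function $P$ is a $\bDelta^0_\omega$-function, and by Proposition~\ref{propstrongdec} (a consequence of Solecki's theorem, transferred to $\ZF+\DCR$ by absoluteness), for \emph{every} Borel partition $\langle A_n \mid n\in\omega\rangle$ of $\RR$ there is some $k$ with $P \sqsubseteq \restr{P}{A_k}$; hence $\restr{P}{A_k}$ is $\bDelta^0_\omega$ but not $\bDelta^0_n$ for any $n<\omega$. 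Thus no $\bDelta^0_\omega$-partition can witness the decomposition required by the conjecture for $f=P$.

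Your reading of the Pawlikowski obstruction is therefore too weak: it does not merely ``rule out a uniform bound on the $\mu_n$'s'' --- it rules out \emph{any} decomposition of $P$ into strictly simpler pieces along a Borel partition. This kills precisely your limit step: the strategy ``iteratively peel off pieces $A_k\in\bDelta^0_\xi$ on which $f$ becomes $\bDelta^0_{\eta_k+1}$-measurable'' cannot succeed for $f=P$ and $\xi=\omega$, since no nonempty Borel piece ever drops below level $\omega$. Your $\Leftarrow$ direction is fine (and agrees with the paper's ``one direction is trivial''), but the $\Rightarrow$ direction is simply false in general; what remains open is only the finite-level case $2<\xi<\omega$.
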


One direction is trivial, but also this Conjecture
fails for $\xi = \omega$ if we assume $\DCR$. The proof of this fact heavily rely on a deep result of Solecki
which will be used to prove Proposition \ref{propstrongdec}.
Let $X_1$, $Y_1$, $X_2$ and $Y_2$ be
separable metric spaces and pick any $f\colon X_1 \to Y_1$ and $g\colon X_2 \to Y_2$: 
then we say that \emph{$f$ is contained
in $g$} ($f \sqsubseteq g$ in symbols) if and only if there are embeddings
$\fhi\colon X_1 \to X_2$ and $\psi\colon  f(X_1) \subseteq Y_1 \to Y_2$ such that $\psi
\circ f = g \circ \fhi$.
This notion of containment between functions
allows to bound the complexity of a function by showing that it is
contained in another function of known complexity. In fact it is
straightforward to check that if $f$ and $g$ are as above we have that if $f \sqsubseteq
g$ and $g$ is a $\bDelta^0_\xi$-function (for some nonzero $\xi <
\omega_1$) then also $f$ is a $\bDelta^0_\xi$-function
(similarly, if $g$ is of Baire class $\xi$ then also $f$ is of Baire class
$\xi$), and conversely if $f$ is \emph{not} a $\bDelta^0_\xi$-function (or a Baire 
class $\xi$ function) then also $g$ is \emph{not} a $\bDelta^0_\xi$-function
(or a Baire class $\xi$ function). 

\begin{proposition}[\ZFC]\label{propstrongdec}
  Let $X$ be a Polish space and $Y$ be a separable metric space. For any
Baire class $1$ function
$f\colon X \to Y$ either there is some countable partition $\seq{X_n}{n \in \omega}$
of $X$ such that $\restr{f}{X_n}$ is continuous for every $n \in \omega$, or else 
for every
Borel partition (equivalently, $\bSigma^1_1$-partition)
$\langle A_n\mid n \in \omega \rangle$ of $X$ there is some $k \in
\omega$ such that $P \sqsubseteq \restr{f}{A_k}$.
\end{proposition}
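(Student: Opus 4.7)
The plan is to deduce the dichotomy from Solecki's structure theorem for Baire class~$1$ functions, which states that if $g\colon Z \to Y$ is Baire class~$1$ from a separable metric space $Z$ into a metric space $Y$, then either $g$ is $\sigma$-continuous (that is, $Z$ admits a countable partition on each piece of which $g$ is continuous) or else $P \sqsubseteq g$. Since any Borel subset of a Polish space is itself a separable metric space in the induced subspace topology, Solecki's theorem applies to $\restr{f}{A}$ for every Borel $A \subseteq X$, and that is essentially all we will need.

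I would argue the contrapositive of what is to be shown. Suppose the second alternative of the proposition fails, so that there is a Borel (equivalently, $\bSigma^1_1$) partition $\langle A_n \mid n \in \omega \rangle$ of $X$ with $P \not\sqsubseteq \restr{f}{A_n}$ for every $n$. The restriction $\restr{f}{A_n}$ is still Baire class~$1$, hence by Solecki's theorem it is $\sigma$-continuous: using $\ACOR$ to make all these choices simultaneously, one picks for every $n$ a countable partition $\langle X^n_k \mid k \in \omega \rangle$ of $A_n$ such that $\restr{f}{X^n_k}$ is continuous (in the subspace topology of $A_n$, which coincides with that inherited from $X$ since $X^n_k \subseteq A_n \subseteq X$). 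Pooling, the countable family $\{X^n_k \mid n,k \in \omega\}$ is a partition of $X$ on every piece of which $f$ is continuous, which is exactly the first alternative.

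The main obstacle is really just the correct (nontrivial) invocation of Solecki's dichotomy; everything else is a routine pasting argument well within $\ACOR$. For completeness one should also remark that the two alternatives are mutually exclusive: if $f$ is $\sigma$-continuous via some partition $\langle X_n \rangle$ of $X$, then for any $A \subseteq X$ the restriction $\restr{f}{A}$ inherits the decomposition $\langle X_n \cap A \rangle$ and is therefore itself $\sigma$-continuous, so Solecki's easy direction yields $P \not\sqsubseteq \restr{f}{A}$, using also that $P$ is not $\sigma$-continuous (Lemma~5.4 of \cite{cichonmorayne}).
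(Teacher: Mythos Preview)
Your proof is correct and takes essentially the same approach as the paper: both apply Solecki's dichotomy to each restriction $\restr{f}{A_n}$ of a given partition and then glue the resulting decompositions, the only difference being that the paper argues the other direction of the contrapositive (assuming the first alternative fails and deriving the second). One minor inaccuracy worth flagging: the hypothesis of Solecki's Theorem~4.1 is that the domain be \emph{Souslin}, not merely separable metric, which is why the paper stresses that analytic subsets of a Polish space are Souslin---but since your $A_n$ are Borel this does not affect the argument.
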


\begin{proof}
Assume that the first alternative does not hold.
  Since each $A_n$ is an analytic subset of a Polish space, by definition it is
also Souslin and hence we can apply Solecki's Theorem 4.1 of  \cite{solecki} to 
$\restr{f}{A_n}$. But by our assumption it can not be the case that the first alternative
of Solecki's Theorem holds for each $\restr{f}{A_n}$, thus  
the second alternative must hold for some index $k
\in \omega$, that is $P \sqsubseteq \restr{f}{A_k}$.
\end{proof}

In particular, for every Borel partition $\langle A_n\mid n \in \omega
\rangle$ of $\RR$ there is some $k \in \omega$ such that
$\restr{P}{A_k} \sqsubseteq P$ and $P \sqsubseteq \restr{P}{A_k}$, i.e.\
there is some piece of the partition on which $P$ has ``maximal
complexity''.
Proposition \ref{propstrongdec} is proved using $\AC$ (since Solecki's Theorem was) 
but, since its statement is projective, it
is true also in any model of $\ZF+\DCR$ by absoluteness (see Lemma 19 of
\cite{andrettamoreonwadge}). This means that, under $\ZF+\DCR$,
for every Borel partition $\langle A_n\mid n \in \omega \rangle$
of $\RR$ there is some $k \in \omega$ such that $\restr{P}{A_k}$
is $\bDelta^0_\omega$ but not $\bDelta^0_n$ (for any $n \in
\omega$), and thus Conjecture
\ref{conj2} fails for $\xi = \omega$.

Therefore we have proved that the Jayne-Rogers Theorem does not admit
any generalization that holds for \emph{every} level of the Borel hierarchy.
Nevertheless, we have also a positive result: in fact
Theorem \ref{theorequivalentreductions} implies that we can extend
its corollary mentioned above (assuming at least $\SLO^{ \sf{D}^\W_\xi}+\NFS$) 
for \emph{every} possible index $\xi \leq 
\omega_1$, that is we can prove that $\sf{D}_\xi \simeq \sf{D}^\W_\xi$ (clearly
this is nontrivial, as we have seen, for $\xi \geq 3$).
Thus, in particular, $\leq_{\sf{D}^\W_\xi}$ and
$\leq_{\sf{D}_\xi}$ give rise to the same structure for every
countable $\xi$. The same is true (under $\SLO^\F+\NFS$) if
we replace $\sf{D}^\W_\xi$ with any Borel-amenable set of
reductions  $\F$
 of level $\xi$, that is, by previous observations, for any $\F$
 such that $\sf{D}^\Lip_\xi \subseteq
\F \subseteq \sf{D}_\xi$ (and in this case it is easy to check that
we have also $\Sat(\F)=\sf{D}_\xi$).
To appreciate this result once again, note that for \emph{every}
$\xi \leq \omega_1$ (hence also for the simplest
cases
$\xi=1,2$) we have that $\sf{D}^\Lip_\xi \subsetneq \sf{D}^\W_\xi$ and thus
also $\sf{D}^\Lip_\xi \subsetneq \sf{D}_\xi$ (therefore
we get other
examples of Borel-amenable sets of reductions for each level). 
 In fact 
it is easy to check that
$f \colon \RR \to \RR\colon x \mapsto \langle x(2n+1)\mid n \in \omega \rangle$
is a \emph{uniformly} continuous function such that $\restr{f}{\bN_s}$ is not Lipschitz
for any $s \in {}^{<\omega}\omega$.
Since by the Baire Category Theorem for \emph{any} partition $\seq{A_n}{n \in \omega}$ of $\RR$ 
there must be
some $n_0 \in \omega$ and a sequence $s \in {}^{<\omega}\omega$ such that
$\bN_s \subseteq \Cl(A_{n_0})$, using an argument similar to the one of Claim \ref{claim6.2.1}
one can check that if $g \colon \RR \to \RR$ is such that $\restr{f}{A_{n_0}} = \restr{g}{A_{n_0}}$ then $g \notin \Lip$: thus, in particular, $f \notin \sf{D}^\Lip_\xi$ for any $\xi \leq \omega_1$.

\section{Some construction principles: nonself\-dual successor
degrees}\label{sectionsuccessor}

We have seen that if $\F \in \sf{BAR}$ then the structure of degrees
associated to it looks like the Wadge one. We now want to go
further and show how to construct, given a selfdual degree, its successor
degree(s) (the successor of a nonselfdual pair can easily be obtained
with the $\oplus$ operation, see Theorem
\ref{theorgeneralproperties}). Let $\xi \leq \omega_1$ be the level of $\F$. 
If $\xi =
\omega_1$ we can appeal to the fact that $\F
\simeq \Bor$ and that the case $\F=\Bor$ has been
already treated in \cite{andrettamartin}, hence we have only to consider the case
$\xi<\omega_1$.

\emph{From this point on we will assume $\SLOL+\NFS+\DCR$ for the rest of this Section}.
Following \cite{andrettamartin} again, at the beginning it is convenient to deal with
$\F$-pointclasses rather than $\F$-degrees (but we will show later in this Section how to avoid them and directly construct successor degrees). Notice that every
$\F$-pointclass is also a boldface pointclass, since by Theorem
\ref{theorequivalentreductions} we have $\F \simeq \sf{D}_\xi
\supseteq \sf{D}_1$. 
First note that from the analysis of the previous Sections the first
nontrivial  (i.e.\ different from $\{ \emptyset\}$ and $\{\RR\}$)
$\F$-pointclass is $\bDelta^0_\xi$ (which is selfdual), and it is followed by the
nonselfdual pointclasses $\bSigma^0_\xi$ and $\bPi^0_\xi$.
Moreover one can easily check that $\bGamma$ is a nonselfdual $\F$-pointclass
if and only if
$\bGamma = \{B \subseteq \RR\mid B \leq_\L A\}$ for some $A
  \subseteq \RR$ such that $A \nleq_\F \neg A$
if and only if 
$\bGamma$ is an $\F$-pointclass which admits a universal
  set.
So let
$\bGamma$ be any nonselfdual $\F$-pointclass and let $A \nleq_\F \neg
A$ be such that $\bGamma = \{B \subseteq \RR\mid B \leq_\L A \}$. Define
\[ \bGamma^* = \{ (F \cap X) \cup (F' \setminus X')\mid F,F' \in
\bSigma^0_\xi, F \cap F' = \emptyset \text{ and }X,X' \in \bGamma\}\]
and $\bDelta^* = \bGamma^* \cap (\bGamma^*)\breve{}$. Using the fact
that $\bSigma^0_\xi$ has the reduction property, we can argue as in
\cite{andrettamartin} to show that $\bGamma^*$ is a nonselfdual
$\F$-pointclass which contains both $\bGamma$ and $\breve{\bGamma}$
and is such that $\bDelta^* = \{ B \subseteq \RR\mid B \leq_\F A \oplus
\neg A\}$. Therefore $\{\bGamma^* \setminus
\bDelta^*,(\bGamma^*)\breve{}\setminus \bDelta^*\}$ is the first
nonselfdual pair above $[A \oplus \neg A]_\F$, i.e.\ it is formed by the
successor degrees of $[A \oplus \neg A]_\F =
\bDelta^* \setminus(\bGamma \cup \breve{\bGamma})$.
Similarly, if $\langle\bGamma_n\mid n \in \omega \rangle$ is a
strictly increasing sequence of nonselfdual $\F$-pointclasses and
$\bGamma_n = \{B \subseteq \RR\mid B \leq_\L A_n\}$, we can define
\[
\begin{split}
\bLambda = \Bigl\{ \bigcup\nolimits_{n\in\omega}(F_n \cap X_n)\mid \;&F_n \in \bSigma^0_\xi,
F_n \cap F_m = \emptyset \text{ if }n \neq m, \text{ and}\\
& X_n \in
\bGamma_n \text{ for every }n \in \omega\Bigr\}
\end{split} \]
and $\bDelta = \bLambda \cap \breve{\bLambda}$. Using the generalized
reduction property of $\bSigma^0_\xi$, one can prove again that
$\bLambda$ is a nonselfdual $\F$-pointclass wich contains each
$\bGamma_n$ and such that $\bDelta = \{ B \subseteq \RR\mid B \leq_\F
\bigoplus_n A_n\}$. Thus $\{ \bLambda \setminus \bDelta,
\breve{\bLambda} \setminus \bDelta\}$ is the first nonselfdual pair above
$[\bigoplus_n A_n]_\F$ and is formed by the successor degrees of
$[\bigoplus_n A_n]_\F = \bDelta
\setminus (\bigcup_n \bGamma_n)$.

This analysis allows us to give a complete description of the
first $\omega_1$ levels of the $\leq_\F$ hierarchy: in particular,
one can inductively show that the $\alpha$-th pair of nonselfdual
$\F$-pointclasses (for $\alpha < \omega_1$) is formed by
$\alpha\text{-}\bSigma^0_\xi$ and its dual (for the
definition of the difference pointclasses $\alpha$-$\bGamma$ see
\cite{kechris}).

Now put $\bPi^0_{<\xi} = \bigcup_{\mu<\xi} \bPi^0_\mu$ and let $A
\subseteq \RR$ be any set such that $A \leq_\F \neg A$: we
can ``summarize'' the constructions above by showing that
\[
\begin{split}\bGamma^+(A) =\Bigl\{\bigcup\nolimits_{n \in\omega}(F_n \cap A_n)\mid \;&F_n \in
  \bPi^0_{<\xi},
F_n \cap F_m = \emptyset \text{ for }n \neq m, \text{ and}\\
&A_n <_\F
A\text{ for every }n \in \omega\Bigr\}
\end{split}\]
and its dual are the smallest nonselfdual $\F$-pointclasses which
contain $A$. To see this, we must first consider two cases: if $A$ is a
successor with respect to $\leq_\F$ (i.e.\ $A \equiv_\F C \oplus \neg C$ for
some $C \nleq_\F \neg C$) then $\bGamma^* \subseteq
\bGamma^+(A)$ (where $\bGamma^*$ is obtained from $\bGamma =\{B \subseteq \RR\mid B \leq_\L C\}$
as before),
while if $A$ is limit then there is a strictly increasing sequence of
nonselfdual
$\F$-pointclasses $\bGamma_n = \{B \subseteq \RR\mid B \leq_\L A_n\}$
such that $A \equiv_\F \bigoplus_n A_n$, and it is not hard to see that
$\bLambda \subseteq \bGamma^+(A)$, where $\bLambda$ is constructed
from the $\bGamma_n$'s as
above. Since $\bGamma^+(A)$ is clearly an $\F$-pointclass, the result
will follow if we can prove that if $B
\subseteq \RR$ is such that $B,\neg B \in \bGamma^+(A)$ then $B
\leq_\F A$. So let $B = \bigcup_n(F_n \cap A_n)$ and $\neg B = \bigcup_n
(F'_n \cap A'_n)$. Since $\bigcup_n(F_n \cup F'_n) = \RR$ and
$\bSigma^0_\xi$ has the generalized reduction property, we can find
$\hat{F}_n,\hat{F}'_n \in \bDelta^0_\xi$ such that they form a
partition of $\RR$ and $\hat{F}_n \subseteq F_n,\;\hat{F}'_n \subseteq
F'_n$ for each $n$. Hence 
\begin{align*}
& {x \in \hat{F}_n} \imp (x \in B \iff x \in A_n)\\
& {x \in \hat{F}'_n} \imp (x \in B \iff x \notin A'_n),
\end{align*}
and since $A_n,\neg A'_n <_\F A$ by $\SLO^\F$, we get $B \leq_\F A$ by
Borel-amenability of $\F$ (see Proposition \ref{lemmapartition}).\\

Now we will show how to construct the
successor of an $\F$-selfdual degree $[A]_\F$ in a ``direct'' way,
i.e.\ without considering the associated $\F$-pointclasses. First fix
an increasing sequence of ordinals $\langle \mu_n\mid n \in \omega
\rangle$ cofinal in $\xi$ (clearly if $\xi = \nu+1$ we can take
$\mu_n=\nu$ for every $n \in \omega$), and a sequence of sets $P_n$
such that $P_n \in \bPi^0_{\mu_n} \setminus \bSigma^0_{\mu_n}$. 
Let $\langle \cdot,\cdot \rangle\colon \omega \times \omega \to \omega$ be
any  bijection,
e.g.\ $\langle n,m \rangle =2^n(2m+1)-1$. Then we can define the homeomorphism
\[ \bigotimes \colon  {}^\omega \RR \to \RR \colon  \langle x_n\mid  n \in
\omega \rangle \mapsto x=\bigotimes\nolimits_n x_n,\]
where $x(\langle n,m \rangle) = x_n(m)$, and, conversely, the ``projections''
$\pi_n\colon  \RR \to \RR$ defined by $\pi_n(x) = \langle
x(\langle n,m \rangle)\mid m \in \omega \rangle$ (clearly, every ``projection''
 is
surjective, continuous and open). Moreover, given  a sequence of functions $f_n\colon 
\RR \to \RR$, we can use the homeomorphism
$\bigotimes$ to  define
the function
\[ \bigotimes(\langle f_n\mid n\in\omega\rangle)=\bigotimes\nolimits_n f_n\colon 
\RR \to \RR \colon  x \mapsto \bigotimes\nolimits_n (f_n(x)).\]
It is not hard to check that $\bigotimes_n f_n$ is continuous if and
only  if all
the $f_n$'s are continuous.
Now
consider the set
\[ \Sigma^\xi(A) = \{x \in \RR\mid \exists n (\pi_{2n}(x) \in P_n
\wedge \forall i<n(\pi_{2i}(x) \notin P_i) \wedge \pi_{2n+1}(x) \in
A)\}.\]
We will prove that $\Sigma^\xi(A)$ is $\bGamma^+(A)$-complete, from
which it follows that $[\Sigma^\xi(A)]_\F$ is a (nonselfdual) successor of
$[A]_\F$. Inductively define $F_0 = \{x \in \RR\mid \pi_0(x) \in
P_0\}$ and $F_{n+1}=\{ x \in \RR\mid \pi_{2(n+1)}(x) \in P_{n+1}
\wedge \forall i \leq n (\pi_{2i}(x) \notin P_i)\}$. Clearly $F_n \in
\bDelta^0_\xi$, $F_n \cap F_m =\emptyset$ for $n \neq m$, and
$\Sigma^\xi(A) \subseteq \bigcup_n F_n$. Moreover put $A_n =
\pi^{-1}_{2n+1}(A)$, and let $\langle
D_{n,k}\mid k \in \omega \rangle$ be a $\bPi^0_{<\xi}$-partitions of $\RR$
such that $A_n \cap D_{n,k} <_\F A$ for every $k \in \omega$ (this partitions
must
exist by Theorem \ref{theorDP} if $A_n \equiv_\F A\equiv_\F
\neg A$: if instead $A_n <_\F A$ simply take 
$D_{n,0}=\RR$ and $D_{n,k+1} = \emptyset$). Finally, let $G_{n,m} \in \bPi^0_{<\xi}$ be such that
$G_{n,m} \cap G_{n,m'} =\emptyset$ if $m \neq m'$ and $F_n = \bigcup_m
G_{n,m}$. Thus
\[ \Sigma^\xi(A) = \bigcup\nolimits_{n,m,k \in \omega} ((G_{n,m} \cap D_{n,k}) \cap (D_{n,k} \cap A)),\]
and hence $\Sigma^\xi(A) \in \bGamma^+(A)$ by definition.

Conversely let $B = \bigcup_k(F_k \cap A_k)$ be a generic set in
$\bGamma^+(A)$ and let $n_k$ be an increasing sequence of natural
numbers such that $F_k \in \bPi^0_{\mu_{n_k}}$ (such a sequence must
exist since the sequence $\mu_n$ is cofinal in $\xi$). Fix $y_i
\notin P_i$ for every $i \in \omega$ and define
\begin{align*}
f_i &=
\begin{cases}
\text{a continuous reduction of }F_k \text{ to }P_{n_k} & \text{if }i=n_k\\
\text{the constant function with value }y_i & \text{otherwise}
\end{cases}\\
g_i & =
\begin{cases}
\text{a continuous reduction of }A_n \text{ to }A & \text{if }i =
n_k\\
\id & \text{otherwise}
\end{cases}
\end{align*}
($A_n$ is continuously reducible to $A$ by $\SLOL$ and the fact that
$A_n <_\F A$).
Finally put $h_{2i} = f_i$ and $h_{2i+1} = g_i$ for every $i \in
\omega$: it is easy to check that $f = \bigotimes_i h_i$ is continuous
and reduces $B$ to $\Sigma^\xi(A)$.

In a similar way one can prove that the set
\[ \Pi^\xi(A) = \Sigma^\xi(A) \cup P_\xi,\]
where $P_\xi = \{x\in \RR\mid \forall n(\pi_{2n}(x) \notin P_n) \}$,
is complete for the dual pointclass of $\bGamma^+(A)$, from which it
follows that $\Pi^\xi(A) \equiv_\F \neg \Sigma^\xi(A)$.
This construction suggests also  how to define certain games $G^\W_\xi$ which represent a full generalization 
for all the levels of the backtrack game (in the sense that the
legal strategies for player $\pII$ in $G^\W_\xi$ induce exactly the functions in
$\sf{D}^\W_\xi$). This seems to be useful
since it allows to use ``combinatorial'' arguments to prove results
about the $\sf{D}^\W_\xi$-hierarchies (and hence, by Proposition
\ref{proponlyDxi} and Theorem \ref{theorequivalentreductions},
also about the degree-structure induced by \emph{any} Borel-amenable
set of reductions).

\end{document}